\title{Lifting Arc Diagrams Under Branched Covers: An Inverse Problem and its Solution}
\author{Cyrus A. Peterpaul}
\begin{document}
\maketitle

\begin{abstract}
    A branched covering map of surfaces induces a map in the opposite direction between their arc complexes. We represent a branched covering map combinatorially using what we call a lifting picture, and use this representation to computably solve the membership problem of the set of weighted arc diagrams on a given surface which can be obtained by lifting a weighted arc diagram on a bigon. We provide a brute force solution in the general case, and an efficient solution when the input arc diagram is a triangulation.
\end{abstract}

\section{Introduction}
Arc diagrams are simple, combinatorial objects associated to surfaces with boundary. They consist of homotopy classes of disjoint curves, and can be thought of as embedded graphs on suitably marked surfaces. Arc diagrams and the simplicial complexes, known as arc complexes, which can be built from them, are have been studied by topologists for many years; see, for example, \cite{hatcher1991triangulations}, \cite{korkmaz_papadopoulos_2010}, \cite{fomin2008cluster}. This paper examines the behavior of weighted arc diagrams (that is, diagrams with nonnegative real numbers assigned to each arc) under topological branched covering maps. A given branched covering map of marked surfaces induces a map between arcs of the base space and arcs of the total space by path lifting.  

We are interested in the membership problem of the set of weighted arc diagrams which can be realized by such a process of lifting from a disk with two marked boundary points, which is referred to as a bigon. That is, given a weighted arc diagram, decide if it can be realized by lifting the arcs of a diagram on a bigon under a suitable branched cover. In section \ref{realizability}, we construct a combinatorial way of representing a topological branched cover which we refer to as a lifting picture. Because there are finitely many lifting pictures on a given surface, this problem admits a finite, if computationally expensive, solution, which we present as algorithm \ref{brute-force}. 

In the sections which follow, we present a much more efficient solution to this problem in the case of an arc diagram which is a triangulation of the surface on which it lives. We tackle this by first presenting solutions to two special cases in sections \ref{homonymous recursion section} and \ref{heteronymous section}, then merging these two special case solutions to handle a general triangulation in section \ref{general maximal section}.

\section{Definitions and Notation}
We first need to define a particular kind of marked surface with boundary, which we will call a \emph{substrate}. The objects and definitions here broadly track with those in \cite{fomin2008cluster}.
\begin{define}
Let $\Sigma$ be a surface with boundary. Let $V$ be a finite set of distinct points on the boundary of $\Sigma$ which we call \emph{vertices}. A \emph{2-coloring} is a map $V \to \{\alpha,\beta\}$ so that no two neighboring vertices are assigned the same value.
\end{define}

\begin{define}
A \emph{substrate} is a surface $\Sigma$ with boundary equipped with a set $M_\Sigma$ of marked points in the interior of $\Sigma$, a set of vertices $V_\Sigma$, and a 2-coloring $C_\Sigma:V_\sigma \to \{\alpha,\beta\}$. Each boundary component of $\Sigma$ must contain at least 2 vertices. Call the componenets of $\bdry \Sigma \backslash V_\Sigma$ \emph{boundary edges}.
\end{define}

The set $M_\Sigma$ will typically be empty except for bigons, which will be defined shortly. Substrates are the homes of arc diagrams. 

\begin{define}
Let $\Sigma$ be a substrate and $p:[0,1] \to \Sigma$ be a simple path with $p(0),p(1) \in V_\Sigma$, and $p(t) \not\in M_\Sigma$ for all $t$. An \emph{arc} is the homotopy class relative to $\{0,1\}$ of such a path through paths disjoint from $M_\Sigma$.
\end{define}

We will simply say \emph{homotopic} to mean \emph{homotopic rel endpoints through paths disjoint from} $M_\Sigma$ in the context of arcs. An arc is called \emph{trivial} if it is homotopic to a path in the boundary of $\Sigma$ which intersect $V_\Sigma$ only at its endpoints. Otherwise, it is called \emph{nontrivial}. In particular, an arc which is homotopic to a vertex is trivial. An arc is called \emph{homonymous} if $C_\Sigma$ takes the same value at both of its endpoints; that is, if its endpoints are the same color. Otherwise, an arc is called \emph{heteronymous}.

\begin{define}
Two arcs $a$ and $b$ are \emph{noncrossing} if there are paths representing $a$ and $b$ that are disjoint except at their endpoints. Otherwise, $a$ and $b$ \emph{cross}.
\end{define}

\begin{define}
An \emph{arc diagram} on a substrate $\Sigma$ is a set of mutually noncrossing arcs on $\Sigma$. 
\end{define}

We will call an arc diagram \emph{clean} if it contains no trivial arcs. An arc diagram $D$ is \emph{maximal} if any nontrivial arc not already in $D$ crosses at least one arc in $D$. An arc diagram is \emph{fully homonymous} if it contains only homonymous arcs. The arcs of a diagram may also be assigned weights.

\begin{define}
A \emph{weighted arc diagram} is an arc diagram $D$ equipped with a map $w_D:D \to{} [0,\infty)$.
\end{define}

\begin{define}
A weighted arc diagram $E$ \emph{extends} a weighted arc diagram $D$ if $D \subset E$ and $w_E = w_D$ on the arcs of $D$. $E$ is also called an \emph{extension} of $D$.
\end{define}

We will also need to define a branched cover of substrates. Compare to the definitions of topological branched cover in \cite{mohar1988branched} and \cite{pikekosz1996basic}.

\begin{define}
Let $X$ and $Y$ be surfaces. A continuous map $f: X \to Y$ is called a \emph{branched covering map} if there is a finite set $\Delta \subset Y$ so that $f$ restricted to $f^{-1}(Y\backslash\Delta)$ is a covering map, and it meets the following regularity condition. For each $p \in X$ there are open neighborhoods $U_p$ of $p$ and $V_{f(p)}$ of $f(p)$ with charts $\psi:V_{f(p)} \to Z$ and $\phi:U_p \to Z$, where $Z$ is an open neighborhood of 0 in $\C$, so that the map $\psi \circ f \circ \phi^{-1}:\C \to \C$ is the complex function $z \to z^n$. Note that we are not requiring that $X$ or $Y$ have a complex structure, only that they are topological 2-manifolds.
\end{define}

The smallest such $\Delta$ is called the \emph{singular set}, and its elements are \emph{singular values}. $Y\backslash\Delta$ is called the \emph{regular set} and its elements likewise called \emph{regular values}. For each $p \in X$, the number $n$ so that $f$ is $z \to z^n$ in local coordinates around $p$ and $f(p)$ is called the \emph{local degree} of $f$ at $p$, $\deg f(p)$. A point $p$ where $\deg f(p) > 1$ is called a \emph{branch point}. 

A branched covering map is called \emph{simple} if the local degree of $f$ is 2 at each branch point and the fiber over any singular value contains a unique branch point.

\begin{define}
Let $\Sigma$ and $\Sigma^{\prime}$ be substrates. A \emph{branched cover} of $\Sigma^\prime$ by $\Sigma$ is a branched covering map $f:\Sigma \to \Sigma^\prime$ which takes $V_\Sigma$ to $V_{\Sigma^\prime}$, whose set of singular values is $M_{\Sigma^\prime}$ and which commutes with the 2-coloring. That is, for all vertices $v$ of $\Sigma$, $C_\Sigma(v)=C_{\Sigma^\prime}(f(v))$. We say that $\Sigma$ is a branched cover of $\Sigma^\prime$ if there exists such a branched covering map. 
\end{define}
We are particularly interested in branched covers of bigons. 
\begin{define}
A topological disk equipped with the structure of a substrate is a \emph{polygon}. A polygon with 2 vertices is a \emph{bigon}.
\end{define}

Arcs which share an endpoint are naturally ordered, a property we will use shortly. Suppose $D$ is an arc diagram on substrate $\Sigma$. Orient each boundary component of $\Sigma$ so that the interior of $\Sigma$ is on the left. Choose a set of smooth, disjoint, simple paths realizing $D$. The arcs incident on a given vertex may now be ordered left to right by the angle of their inward-pointing tangent vectors at $v$. We call this the \emph{canonical order} at each vertex. 

\begin{prop}\label{branching arcs exist}
Suppose $f: \Sigma \to \B$ is a simple branched cover. Let $s \in M_\B$ be a singular value, and $\wt{s}$ the branch point in the fiber over $s$. If $p$ is a simple path connecting $s$ to $v$, the union of the two lifts of $p$ which pass through $\wt{s}$ are a path representing a nontrivial homonymous arc. No other union of lifts of $p$ represents an arc.
\end{prop}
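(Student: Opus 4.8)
The plan is to lift $p$ explicitly, read the local behaviour of the lifts over the singular value $s$ off the normal form $z\mapsto z^{n}$, and then verify the three asserted properties of the resulting arc directly from that picture. Assume, as we may, that $p$ meets $M_\B$ only at $p(0)=s$ and $V_\B$ only at $p(1)=v$, so that $p|_{(0,1]}$ lies in the regular set $\B\backslash M_\B$, over which $f$ is a finite covering of degree $d$. Since $v$ is a regular value, $f^{-1}(v)$ is a set of $d$ points, which are vertices of $\Sigma$; lifting $p|_{(0,1]}$ so as to end at each of them in turn (unique path lifting) yields $d$ lifts $\wt{p}_1,\dots,\wt{p}_d$ which are pairwise disjoint and each injective on $(0,1]$, and each extends continuously to $t=0$ with $\wt{p}_j(0)\in f^{-1}(s)$. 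Because $f$ is simple, $f^{-1}(s)$ consists of the branch point $\wt{s}$ --- where $f$ looks like $z\mapsto z^{2}$ --- together with $d-2$ simple points, at each of which $f$ is a local homeomorphism. Over a simple point exactly one lift limits there; over $\wt{s}$, in the charts $\phi,\psi$ the preimage of $p$ near $s$ is the two branches of a square root, which both limit to $\wt{s}$. So, after relabelling, exactly $\wt{p}_1$ and $\wt{p}_2$ limit to $\wt{s}$.

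Next I would assemble the arc. Setting $\wt{p}_1(0)=\wt{p}_2(0)=\wt{s}$, the images $\wt{p}_1([0,1])$ and $\wt{p}_2([0,1])$ are each injective and meet only at $\wt{s}$, so their union is an embedded arc having $\wt{s}$ as an interior point and having as endpoints the vertices $A:=\wt{p}_1(1)$ and $B:=\wt{p}_2(1)$ of $f^{-1}(v)$; these are distinct because distinct lifts of $p|_{(0,1]}$ are disjoint on $(0,1]$. The interior of this path maps into $p((0,1))\cup\{s\}$, hence misses $V_\Sigma$ (the vertices of $\Sigma$ lie over those of $\B$), and a routine check that it also misses $M_\Sigma$ shows it represents an arc $a$. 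Since $f(A)=f(B)=v$, compatibility of $f$ with the $2$-coloring gives $C_\Sigma(A)=C_\B(v)=C_\Sigma(B)$, so $a$ is homonymous. For nontriviality I would argue that $a$ has nothing it could be homotopic to: it is not homotopic to a constant at a vertex since $A\ne B$, and it is not homotopic to a boundary path meeting $V_\Sigma$ only at its endpoints, because no such path exists --- it would join the distinct, equally coloured vertices $A$ and $B$ along a single boundary component without passing through another vertex, making $A$ and $B$ neighbouring, and neighbouring vertices have opposite colours. Hence $a$ is nontrivial.

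For the final sentence I would dispose of the remaining unions of lifts. A single lift $\wt{p}_j$ has an endpoint at a point of $f^{-1}(s)$, which is an interior, non-vertex point, so it does not represent an arc; and since the only incidence among the $\wt{p}_j$ is $\wt{s}\in\wt{p}_1\cap\wt{p}_2$, any union other than $\wt{p}_1\cup\wt{p}_2$ is disconnected and so cannot represent an arc either. I expect the crux to be the local analysis over $s$: one has to extract from the $z\mapsto z^{n}$ normal form both that precisely two of the lifts of $p$ run into the branch point (with the other $d-2$ landing on the simple points of the fiber) and that the limits at $t\to 0^{+}$ exist at all, and one has to keep all the lifts and homotopies involved clear of the marked points $M_\B$ and $M_\Sigma$. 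Once that local picture is pinned down, the embeddedness of $\wt{p}_1\cup\wt{p}_2$ and the three properties of $a$ are the short bookkeeping above.
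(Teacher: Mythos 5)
Your proposal is correct and follows the same route as the paper's proof: lift $p$, observe that exactly two lifts meet at $\wt{s}$ because the local degree there is $2$ while the remaining lifts have free endpoints in $f^{-1}(s)$, conclude homonymy from the color-compatibility of $f$, and conclude nontriviality from the fact that the endpoints are distinct vertices of the same color (a trivial arc with distinct endpoints would join adjacent, hence oppositely colored, vertices). You spell out the local normal-form analysis and the existence of the limits at $t\to 0^{+}$ that the paper leaves implicit, but the argument is structurally identical.
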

\begin{proof}
The preimage of $p$ is a collection of paths in $\Sigma$, each of which connects a point in the fiber over $s$ to a vertex which matches the color of $v$. Each path has an endpoint on a distinct vertex of $\Sigma$. Two of these paths, $a$ and $b$, will contain $\wt{s}$ as an endpoint; the rest will have endpoints on distinct points of the fiber over $s$. Therefore, only $a$ and $b$ join together into a path between vertices of $\Sigma$; none of the rest of the paths fit together into a representative of an arc. 

Now it remains to show that $a \cup b$ represents a nontrivial arc. Since $a$ and $b$ will have endpoints on distinct vertices of $\Sigma$, $a \cup b$ is not a loop. The only trivial, homonymous arcs are loops, so $a \cup b$ represents a nontrivial arc. 
\end{proof}

Suppose $f: \Sigma \to \B$ is a branched cover of a bigon by a substrate. Choose simple, disjoint paths connecting each point of $M_\B$ to a vertex of $\B$. By Proposition \ref{branching arcs exist}, each path $p$ may be asociated with a unique, nontrivial, homonymous arc $\wt{p}$ on $\Sigma$, which we call a \emph{branching arc}. We call the set of these arcs a \emph{branching diagram} for $f$. If we change these paths by homotopy rel endpoints and through paths disjoint from $M_\B$, the branching diagram remains constant. 

\begin{define}
Let $D$ be a branching diagram on $\Sigma$ for the branched cover $f$. A component of $\Sigma \backslash D$ is called a \emph{sheet}. 
\end{define}

\begin{prop}\label{sheets are simply connected}
Every sheet $s$ of a simple branched cover $f:\Sigma \to \B$ is simply connected, and $f$ is a homeomorphism on the interior of $s$. 
\end{prop}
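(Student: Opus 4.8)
The plan is to reduce the statement to the triviality of coverings of simply connected spaces. Fix the simple arcs $p_1,\dots,p_k$ running from the singular values $M_\B=\{s_1,\dots,s_k\}$ to vertices of $\B$ that were used to define the branching diagram $D$, let $\wt{p_i}$ denote the corresponding branching arcs, and put $P=p_1\cup\cdots\cup p_k$. Since the $p_i$ have disjoint interiors and each meets $\partial\B$, the set $P$ is a forest incident to $\partial\B$, so $\B^{*}:=\B\setminus P$ is connected and simply connected. Moreover $M_\B\subset P$, so $\B^{*}$ lies in the regular set of $f$, and hence the restriction $f\colon f^{-1}(\B^{*})\to\B^{*}$ is an honest covering map. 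A covering map onto a simply connected, locally path-connected space is trivial, so $f^{-1}(\B^{*})$ is a disjoint union of copies $W_1,\dots,W_m$ of $\B^{*}$, each carried homeomorphically onto $\B^{*}$ by $f$.

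Next I would compare the $W_j$ with the sheets. By Proposition \ref{branching arcs exist}, $f^{-1}(P)$ consists of the branching arcs $\wt{p_i}\in D$ together with finitely many other lifts, which I will call \emph{extra lifts}: each is an embedded arc carried homeomorphically onto some $p_i$ by $f$, joining a non-branch point of $f^{-1}(s_i)$ to a vertex of $\Sigma$. Thus $\Sigma\setminus D$ is $f^{-1}(\B^{*})$ together with the extra lifts, and every sheet is a union of some of the $W_j$ with some extra lifts glued back in. The decisive local fact is that an extra lift does not separate the $W_j$ adjacent to it: near the non-branch point of $f^{-1}(s_i)$ at which an extra lift $c$ terminates, $f$ is a local homeomorphism onto a disk neighbourhood of $s_i$ that meets $P$ only along the radial arc $p_i$; as a disk minus a radial slit is connected, a punctured neighbourhood of that point with $c$ deleted is connected, which forces the two sides of $c$ to lie in one and the same $W_{j(c)}$. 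Propagating this along $c$ — whose interior consists of regular points — and then chaining the extra lifts that meet any vertex not in $D$, one finds that each sheet $s$ contains exactly one $W_j$ and equals $W_j$ together with precisely those extra lifts $c$ with $j(c)=j$. Since $\B^{*}=\B\setminus P$ is missing exactly one arc over each $p_i$, the disk $W_j$ has exactly one corresponding slit, and the same local analysis shows it is either left open in $s$ or healed by a single extra lift, never by two; hence $f$ restricted to $s$ is injective.

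To conclude, note that $f$ is a local homeomorphism on the interior of $s$, because every branch point lies in $D$, and it is injective there by the previous step; an open continuous injection is a homeomorphism onto its image, so $f$ restricts to a homeomorphism on the interior of $s$. Its image $f(s)$ is $\B$ with the subcollection of those $p_i$ that are not healed in $s$ removed, which is again $\B$ with a boundary-incident forest removed and hence simply connected; therefore $s\cong f(s)$ is simply connected. The step I expect to be the real work is the local separation fact in the middle paragraph — equivalently, that no sheet acquires two lifts of the same $p_i$ — since this is the one place where one must leave the clean covering-space picture over $\B^{*}$ and use the simplicity hypothesis and the local $z\mapsto z^{n}$ model of $f$; the remaining steps are formal.
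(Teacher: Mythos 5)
Your proof is correct, and it takes a genuinely different (and more explicit) route than the paper's. The paper's argument asserts that $f$ restricted to the interior of $s$ is a covering map onto the simply connected open set $\inside(\B)$ with the images of the branching arcs in $\bdry s$ deleted, so that injectivity on $\pi_1$ forces $s$ to be simply connected; it then extracts that the covering has degree one from the Euler-characteristic identity $1=\chi(s) = d\,\chi(f(s))$. You instead cut $\B$ along the full system of branch cuts $P$ to get $\B^{*}$, trivialize the honest covering $f^{-1}(\B^{*}) \to \B^{*}$ into components $W_j$, and then use the local degree-one model at the non-branch endpoint of each extra lift to show an extra lift never separates the $W_j$ it meets, so that each sheet is exactly one $W_j$ with at most one extra lift glued into each slit. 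This gives injectivity of $f$ on a sheet directly, with no Euler characteristics, and simple connectedness follows because $f(s)$ is $\B$ minus a boundary-incident subforest of $P$. The trade-off is roughly this: the paper's assertion that $f|_{\inside(s)}$ is a covering map onto precisely $\inside(\B)\setminus f(\bdry s \cap D)$ is really the substantive content of the proposition and is stated without proof; your ``extra lifts do not separate'' and ``never healed by two'' arguments are exactly what is needed to justify that identification of the image and the degree. Conversely, the paper's route is much shorter, and once one accepts the covering claim the multiplicativity of $\chi$ replaces your slit-by-slit injectivity bookkeeping with a one-line computation. Both give the two conclusions in opposite orders (you get injectivity first and then simple connectedness; the paper gets simple connectedness first and then degree one), which is a pleasant symmetry.
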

\begin{proof}
The restriction of $f$ to the interior of $s$ is a covering map which sends interior of $(s)$ to the open set $\inside(\B)$ with the images of the branching arcs in $\bdry s$ deleted. This is a simply connected open set. Since $f$ is injective on the fundamental group of $\inside(s)$, $s$ must be simply connected.

Since $s$ is simply connected, $\chi(s)=1$. This implies that, since $f|_{\inside(s)}$ is a covering map, $1=\chi(s)=d \chi(f(s))=d$, where $d$ is the degree of $f|_{\inside(s)}$. Hence, $d=1$ and $f|_{\inside(s)}$ is a homeomorphism.
\end{proof}

An important consequence of Proposition \ref{sheets are simply connected} is that each sheet of a simple branched cover contains exactly two boundary edges of $\Sigma$.

Conversely, one can use a branching diagram together with an additional choice of order to specify a simple branched cover of the bigon. Call an arc diagram $D$ \emph{ordered} if there is a total order on the arcs of $D$ which agrees with the canonical order at each vertex. 

\begin{prop}\label{branch data}
Let $\Sigma$ be a substrate, and $D$ be an ordered arc diagram on $\Sigma$ containing $n=|V_\Sigma|/2-\chi(\Sigma)$ homonymous arcs so that each component of $\Sigma \backslash D$ is simply connected and contains two of $\Sigma$'s boundary edges. Then there exists a simple branched cover $f:\Sigma \to \B$ which has $D$ as its branching diagram.
\end{prop}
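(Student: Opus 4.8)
The plan is to construct $f$ by cutting $\Sigma$ along $D$, mapping each resulting piece onto a copy of the bigon (with the paths that connect $M_\B$ to vertices unfolded along the boundary), and then verifying that the pieces reassemble into a simple branched cover whose branching diagram is $D$.

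I would begin with a counting step. Fix disjoint simple representatives of the arcs $a_1,\dots,a_n$ of $D$ and adjoin to them the $|V_\Sigma|$ boundary edges of $\Sigma$; the result is a cellulation of $\Sigma$ with $|V_\Sigma|$ vertices, $n+|V_\Sigma|$ edges, and with the components of $\Sigma\backslash D$ as its faces, each a closed disk by the simple-connectivity hypothesis together with Proposition \ref{sheets are simply connected}. Euler's formula then yields that $\Sigma\backslash D$ has exactly $d:=n+\chi(\Sigma)=|V_\Sigma|/2$ components --- this is precisely where the numerical hypothesis on $n$ is consumed. Moreover, since the two endpoints of a boundary edge have opposite colours while those of a homonymous arc have the same colour, the boundary of each sheet, read cyclically, consists of one boundary edge of $\Sigma$, then a (possibly empty) run of segments lying on homonymous arcs all of one colour, then a second boundary edge, then a (possibly empty) run of segments on homonymous arcs all of the other colour.

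Next I would set up the target and the sheet maps. Take a bigon $\B$ with vertices $v_\alpha,v_\beta$ and marked points $s_1,\dots,s_n$ in bijection with $a_1,\dots,a_n$, and choose pairwise disjoint simple paths $p_1,\dots,p_n$, with $p_i$ running from $s_i$ to $v_\alpha$ or $v_\beta$ according to the colour of $a_i$, arranged so that the paths incident to a given vertex of $\B$ occur there in the order induced by the fixed total order on $D$ (such an arrangement exists precisely because $D$ is ordered). For each $i$ pick an interior point $x_i$ of the chosen representative of $a_i$ and a homeomorphism $h_i$ from that representative onto $p_i$ taking $x_i$ to $s_i$ and endpoints to endpoints. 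For each sheet $\sigma$, let $\widehat{\B}_\sigma$ be the disk obtained from $\B$ by cutting along exactly those $p_i$ whose arc $a_i$ meets $\bdry\sigma$, with regluing map $q_\sigma:\widehat{\B}_\sigma\to\B$; using the cyclic boundary description above, choose a homeomorphism $\phi_\sigma:\sigma\to\widehat{\B}_\sigma$ that sends the two boundary edges of $\sigma$ to those of $\B$, sends the segment on each $a_i$ to the corresponding slit copy of $p_i$ via $h_i$, and respects colours and the canonical order of arcs at each vertex. Then set $f|_\sigma=q_\sigma\circ\phi_\sigma$; because the $\phi_\sigma$ were built from the common maps $h_i$, these agree on shared arcs and glue to a well-defined continuous map $f:\Sigma\to\B$.

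Finally I would verify the axioms. Away from the $n$ points $x_i$, the map $f$ is a local homeomorphism onto $\B\backslash M_\B$ --- the ``folds'' along each arc of $D$ are undone by the sheet on the other side, and at each vertex the sheet sectors tile the neighbourhood of the image vertex because the canonical orders are compatible with the chosen arrangement of the $p_i$ --- hence, being proper, it is a covering map there; near each $x_i$ the two incident sheet germs are cross-glued along $p_i$, so $f$ is locally modelled on $z\mapsto z^2$, giving the regularity condition with local degree $2$ and exactly one branch point in each fibre over a singular value. Thus $f$ is a simple branched cover, it carries $V_\Sigma$ onto $V_\B=\{v_\alpha,v_\beta\}$ preserving colours, its singular set is $M_\B$, and by construction the two lifts of $p_i$ meeting at the branch point $x_i$ are the two sides of $a_i$, so $D=\{a_1,\dots,a_n\}$ is the branching diagram of $f$. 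The step I expect to be delicate is the middle one: showing that the $\phi_\sigma$ can be chosen coherently, that is, that the required foldings of all the sheets along the $n$ arcs of $D$ can be performed simultaneously while producing the $z\mapsto z^2$ model at each branch point and respecting a single placement of the paths $p_i$. This coherence is exactly what orderedness buys: if the canonical orders at the distinct vertices of $\Sigma$ lying over a common vertex of $\B$ could not be embedded into one linear order, no compatible placement of the $p_i$ --- and hence no such family $\{\phi_\sigma\}$ --- would exist. Once this coherence is established, the remaining verifications are routine.
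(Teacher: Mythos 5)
Your construction is the same one the paper uses: fix paths $p_i$ from the singular values $m_i$ to the appropriately colored vertex of $\B$, placed so that their order at each vertex agrees with the given total order on $D$; define the folding maps $d_i\to p_i$; and then, for each component of $\Sigma\setminus D$, choose a homeomorphism onto the bigon slit along the relevant $p_i$ that agrees with those folding maps on the boundary, gluing the pieces into $f$. You fill in several details the paper leaves implicit --- the Euler-characteristic count of sheets, the cyclic description of a sheet boundary (one boundary edge, an $\alpha$-run, the other boundary edge, a $\beta$-run), the verification that $f$ is a local homeomorphism along arcs and at vertices and has the $z\mapsto z^2$ model at the marked points, and the observation that orderedness is exactly what makes a single coherent placement of the $p_i$ possible --- but these are elaborations of the same argument, not a different route.
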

\begin{proof}
Let $d_1,\ldots,d_n$ be the arcs of $D$, and number the points of $M_\B$ $m_1,\ldots,m_n$. For $1 \leq i \leq n$, connect $m_i$ to the vertex which matches the color of the endpoints of $d_i$ by a simple path $p_i$ so that the canonical order at each vertex agrees with the order of the $p_i$'s. Let $\phi:\bdry \Sigma \cup D \to \B$ which sends $\bdry \Sigma \to \bdry \B$, sends vertices to vertices of the same color, and which sends $d_i$ to $p_i$.

For each sheet $s$, choose a homeomorphism $\phi_s:\inside(s) \to \B \backslash \phi(\bdry s)$ so that the map $f_s:s \to \B$ defined by $f_s=\phi_s$ on $\inside(s)$ and $\phi$ on $\bdry s$ is continuous. Define $f$ to be $f_s$ on each sheet $s$. 
\end{proof}

An arc on the bigon will have the same lifts under any branched cover with the same ordered branching diagram, so we will consider branched covers with the same ordered branching diagram equivalent. 

\begin{remark}
Here is an equivalent definition of substrates and arcs, which is sometimes convenient. Instead of 2-coloring the vertices of the substrate $\Sigma$, we can instead 2-color its boundary edges. An arc is now the homotopy class of a simple path disjoint from $M_\Sigma$ with each endpoint on a boundary edge, through paths disjoint from $V_\Sigma$ and $M_\Sigma$ which have their endpoints on $\bdry\Sigma$. That is, We allow the ends of the arc to slide along along a boundary edge, but not into a vertex. All other definitions and results about arc diagrams may be used with straightforward modifications. 
\end{remark}

\section{Lifting and Realizability}\label{realizability}

Suppose $\Sigma$ and $\B$ are substrates, and $B$ is a weighted arc diagram on $\B$. Let $f:\Sigma \to \B$ be a branched cover of substrates. Since $f$ is an ordinary covering map in the complement of $M_\B$, for any arc $a \in B$ we may choose a path $\hat{a}$ representing $a$ and lift $\hat{a}$ to a set of $\deg(f)$-many paths in $\Sigma$. The homotopy classes of these paths form a set of arcs called the \emph{lifts} of $a$. Some of these may be trivial arcs. We denote the set of nontrivial lifts of $a$ by $\lift(a)$. 

The branched cover $f:\Sigma \to \B$ together with the weighted arc diagram $B$ determines a clean, weighted arc diagram $\wt{B}$ on $\Sigma$ in the following way. The set of arcs in $\wt{B}$ is the union of nontrivial lifts
\[
	\bigcup_{b \in B} \lift(b)
\]
of arcs in $B$. For an arc $\wt{b} \in \wt{B}$, define the weight by
\begin{equation}
w_{\wt{B}}(\wt{b}) = \sum_{\{b \in B| \wt{b} \in \lift(b)\}} w_B(b)
\end{equation}
That is, the weight of an arc in $\wt{B}$ is the sum of the weights of all arcs which lift to it. It is convenient to treat arcs with weight zero as essentially trivial. So we impose the following equivalence relation.

\begin{define}
Arc diagrams $D$ and $D^\prime$ are \emph{equivalent} if they differ only on weight zero arcs. That is, if $a \in D \backslash D^\prime$, then $w_D(a)=0$, and likewise if $a \in D^\prime \backslash D$, then $w_{D^\prime}=0$.
\end{define}

\begin{define}
$\wt{B}$ is the \emph{lift of} $B$ \emph{under} $f$. We also say $B$ \emph{lifts to} $\wt{B}$.
\end{define}

\begin{define}
A \emph{lifting picture} is a quintuple $(\Sigma,C,\prec,\B,B)$ consisting of a substrate $\Sigma$, a branching diagram $C$, an order $\prec$ on $C$, a bigon $\B$, and an arc diagram $B$ on $\B$.
\end{define}

By Proposition \ref{branch data}, $C,\prec$ determines a branched cover $\Sigma \to \B$. 

\begin{define}
Let $L=(\Sigma,C,\prec,\B,B)$ be a lifting picture, and $D$ an arc diagram on $\Sigma$. $L$ \emph{realizes} $D$ if $B$ lifts under $f$ to $D$. $D$ is \emph{realizable} if there exists a lifting picture which realizes $D$. 
\end{define}

\begin{define}
The \emph{realizability problem} asks, given a weighted arc diagram $D$ on a substrate $\Sigma$, to decide whether $D$ is realizable, and to produce a lifting picture $(\Sigma,C,\prec,\B,B)$ which realizes $D$ if one exists.
\end{define}

Because there are only finitely many lifting pictures on a given substrate, there is a brute force solution the realizability problem. 

\begin{algo}\label{brute-force}
INPUT: A weighted arc diagram $D$ with weights $w_D$ on a substrate $\Sigma$. \\
OUTPUT: A lifting picture $(\Sigma,C,\prec,\B,B)$ realizing $D$ if one exists, or the message that no such lifting picture exists.\\
PROCEDURE: Choose a branching diagram $C$ compatible with $D$ and a valid order $\prec$ on $C$. Fix a bigon $\B$ with the appropriate number of interior marked points. Choose an unweighted arc diagram on $\B$. By propositions \ref{branching arcs exist} and \ref{branch data}, choosing $C$ and $\prec$ specifies a branched covering map $f:\Sigma \to \B$.

If the arcs of $B$ lift to those of $D$, then we get a linear map $L$ from vectors of weights $w_B$ on $B$ to vectors of weights on $D$. Compute the Moore-Penrose inverse $L^+$ of $L$. The equation $L w_B = w_D$ has a solution if and only if $L L^+ w_D = w_D$ \cite{james_1978}. In that case, $L^+ w_D$ is a solution to $L w_B = w_D$, so assign weights $L^+ w_D$ to the arcs of $w_B$, and return the resulting lifting picture $(\Sigma,C,\prec,\B,B)$. Otherwise, try a new combination of $C$, $\prec$, and unweighted diagram $B$ on $\B$. If all combinations are exhausted without yielding a solvable equation $L w_B = w_D$, then return the message that $D$ is not realizable. $$\Diamond$$
\end{algo}

\begin{prop}
A clean, maximal arc diagram $D$ on a substrate $\Sigma$ with $M_\Sigma=\emptyset$ defines a triangulation of $\Sigma$, and contains $N-3\chi(\Sigma)$ arcs, where $N$ is the cardinality of $V_\Sigma$ and $\chi(\Sigma)$ denotes the Euler characteristic.
\end{prop}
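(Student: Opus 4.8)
The plan is to split the statement into two parts: first, that every component of $\Sigma \setminus D$ is a triangle, so that $D$ is a triangulation of $\Sigma$; and second, a count of the arcs read off from the Euler characteristic of the resulting CW structure. I will assume throughout that $\Sigma$ is connected and is not itself a bigon, which is the one degenerate case where the claimed count fails.

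For the triangulation claim, I would fix a component $R$ of $\Sigma \setminus D$ and observe that its boundary is a cyclic sequence of \emph{sides}, each an arc of $D$ or a boundary edge of $\Sigma$, meeting at vertices of $V_\Sigma$. The goal is to show $R$ is a disk bounded by exactly three sides. Suppose not: either $R$ is not a disk, or it is a disk with at least four sides. In either case one can produce a simple arc $\gamma$ properly embedded in $R$, with endpoints at vertices, which is essential in $R$ and not isotopic into a single side of $R$ --- a non-separating or core-parallel arc in the non-disk case, and a diagonal joining two non-adjacent corners in the case of a disk with four or more sides. Since the interior of $R$ is disjoint from $D$, the arc $\gamma$ crosses no arc of $D$ and coincides with none of them; and because $M_\Sigma = \emptyset$, an innermost-disk argument shows $\gamma$ is nontrivial in $\Sigma$. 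Then $D \cup \{\gamma\}$ is a strictly larger noncrossing diagram, contradicting maximality of $D$. Disks bounded by one or two sides are excluded by a short separate check: a monogon would make a side a null-homotopic loop, hence a trivial arc, contradicting cleanness, while a bigon region would force a trivial arc, two homotopic arcs of $D$, or $\Sigma$ itself being a bigon --- all ruled out. Hence every region of $\Sigma \setminus D$ is a triangle.

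For the count, endow $\Sigma$ with the CW structure whose $0$-cells are the vertices, whose $1$-cells are the arcs of $D$ together with the boundary edges, and whose $2$-cells are the triangles. Write $a = |D|$, let $b$ be the number of boundary edges, and let $t$ be the number of triangles. Counting incidences of sides with triangles gives $3t = 2a + b$, because each triangle has three sides, each arc of $D$ appears as a triangle side exactly twice (counting a self-adjacency with multiplicity), and each boundary edge appears as a triangle side exactly once. Since every vertex lies on a unique boundary component and a boundary component carrying $k$ vertices is divided into $k$ boundary edges, summing over components gives $b = |V_\Sigma| = N$. Therefore
\[
\chi(\Sigma) = N - (a + b) + t = t - a,
\]
and substituting $t = (2a + N)/3$ gives $\chi(\Sigma) = (N - a)/3$, that is, $a = N - 3\chi(\Sigma)$.

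I expect the main obstacle to lie in the triangulation step, specifically in confirming that the constructed arc $\gamma$ is nontrivial in $\Sigma$ and not homotopic to an arc already in $D$: the arc $\gamma$ is built to be essential only within $R$, so one must rule out a compressing or boundary-parallel disk for $\gamma$ that sticks out of $\overline{R}$, and it is exactly the hypothesis $M_\Sigma = \emptyset$ that makes this possible. The handling of the one- and two-sided regions also needs a small but careful case analysis. By comparison, the Euler characteristic computation is routine once the triangulation is established.
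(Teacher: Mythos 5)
Your proof follows the same strategy as the paper's: show every complementary region is a triangle by producing a new arc from maximality, then run the Euler characteristic count from the CW structure with $3t = 2a + b$ and $b = N$. The differences are in execution of the triangulation step rather than in the route: for regions with one or two sides, the paper argues that the region's interior must have positive genus (else ``the boundary would collapse'') and then adds an arc, whereas you dispatch monogons and bigon regions directly by deriving a trivial arc, a repeated arc, or $\Sigma$ itself being a bigon; and you explicitly flag two points the paper glosses over, namely that $\Sigma$ being a bigon is a genuine degenerate case of the statement, and that the constructed arc $\gamma$ must be checked to be nontrivial in $\Sigma$ (since maximality only constrains nontrivial arcs). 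Same approach, with the paper's terser spots spelled out more carefully.
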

\begin{proof}
$D$ defines a cell decompostition $\mathcal{C}$ of $\Sigma$ as follows: the set of vertices $V$ of $\mathcal{C}$ is $V_\Sigma$, the set of edges $E$ contains the arcs of $D$ and the boundary edges of $\Sigma$, and the set of 2-cells $F$ consists of the closures of the components of $\Sigma \backslash E$. To see this is a triangulation, suppose there is a 2-cell $C$ that is not a triangle. If the boundary of $C$ contains at least 4 vertices, then any non-adjacent pair of vertices in $\bdry C$ can be connected by an arc which is disjoint from $D$ (as it lies in the interior of $C$) and not already in $D$. This contradicts maximality of $D$. If the boundary of $C$ contains only 1 or 2 vertices, then the interior of $C$ must have genus greater than zero; otherwise, the boundary of $C$ would collapse under homotopy. In that case, once again one can add an arc to the diagram, contradictiong maximality. This is illustrated in Figure \ref{nobigons}.

By Euler's formula, $\chi(\Sigma)=|V|-|E|+|F|$. $N=|V|$. Since $\mathcal{C}$ is a triangulation, $2|D|+\#\{\bdry\ \mathrm{edges}\}=3|F|$, where $|D|$ is the number of arcs in $D$. Since there are the same number of vertices as boundary edges, and $|E|=|D|+\#\{\bdry\ \mathrm{edges}\}$, the equation reduces to $\chi(\Sigma)= N-|D|-N+(2/3)|D|+(1/3)N$, and therefore $|D|=N-3\chi(\Sigma)$.
\end{proof}

\begin{figure}[ht]
    \centering
    \includegraphics[width=0.3\textwidth]{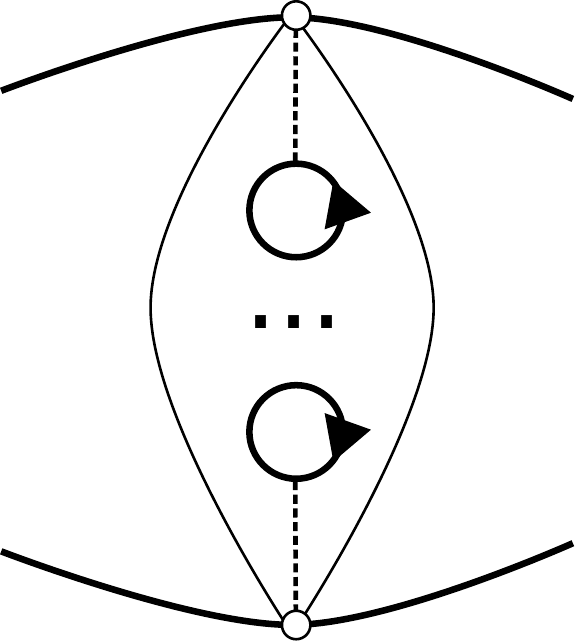}
    \caption{A subdiagram consisting of 2 distinct arcs sharing endpoints must have genus greater than 0. Bold lines represent the boundary of the surface, fine lines represent arcs. The dashed arc can be added to the diagram, meaning it is non-maximal.}
    \label{nobigons}
\end{figure}

We now prove some useful facts about lifting arcs from a bigon under a simple branched cover. 

\begin{prop}\label{how lifting works}
Let $(\Sigma,C,\prec,\B,B)$ be a lifting picture, and $b$ an arc in $B$. Let $S$ be a sheet of the cover. Then $b$ has a unique lift $\wt{b}$ in $S$. If $b$ is homonymous then $\wt{b}$ is nontrivial if and only if $b$ encloses a singular value $v$ such that the branch point $\wt{v}$ in the fiber over $v$ is contained in the boundary of $S$. If $b$ is heteronymous, then $\wt{b}$ is nontrivial if and only if $b$ separates two critical values $v$ and $w$.
\end{prop}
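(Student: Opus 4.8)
The plan is to pass to a normal form in which each sheet is a copy of $\B$ cut open along slits, and then to read triviality off the boundary of that copy.

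\emph{Uniqueness and the cut-open model.} By Proposition~\ref{sheets are simply connected}, $f$ restricts to a homeomorphism of $\inside(S)$ onto $\inside(\B)$ with the images $p_i$ of the branching arcs on $\bdry S$ deleted. Since $S$ is compact, simply connected, and has nonempty boundary, it is a closed disk, and $f|_S: S \to \B$ exhibits $S$ as $\B$ cut along exactly those slit-paths $p_i$ whose branch point $\wt{m_i}$ lies on $\bdry S$ --- each such slit reassembling, in $\bdry S$, into the branching arc through $\wt{m_i}$ --- while the other boundary arcs of $S$ are the two boundary edges of $\Sigma$ it carries. A chosen representative of $b$ avoids $M_\B$, hence the tips of all slits, so it pulls back through this model to a single arc of $S$; any two such pull-backs are homotopic rel endpoints because $S$ is simply connected, which gives the unique lift $\wt{b}$ in $S$. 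The one delicate point is that a representative may cross a slit that is cut in $S$; there the lift momentarily runs into $\bdry S$ along the corresponding branching arc and re-enters $S$ coherently, as forced by the folding of $f|_S$.

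\emph{A triviality criterion in a disk sheet.} The engine of the proof is the following test for an arc $a$ contained in the disk $S$: $a$ is trivial \emph{in $\Sigma$} if and only if its two endpoints either coincide or are the two endpoints of a single boundary edge of $\Sigma$ occurring in $\bdry S$. Indeed, since $S$ is a disk, $a$ is homotopic rel endpoints to each of the two complementary boundary paths of $\bdry S$ joining its ends, and $a$ is trivial exactly when one of those paths lies in $\bdry\Sigma$ and meets $V_\Sigma$ only at its ends --- that is, is a single boundary edge of $\Sigma$ (or the constant path). This is decisive because a branching arc appearing in $\bdry S$ is a nontrivial arc of $\Sigma$, so any boundary path running through one is disqualified, and because the $2$-coloring forbids two vertices of the same color from bounding a single boundary edge.

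\emph{Locating the ends of $\wt{b}$.} Each slit $p_i$ cut in $S$ runs from the interior point $m_i$ to a boundary vertex, so a simple loop or simple arc $b$ meets or straddles that slit with a parity governed by whether $b$ encloses $m_i$. If $b$ is homonymous, say a loop at a vertex $v$, it follows that $\wt{b}$ is again a loop --- hence trivial --- precisely when $b$ encloses no singular value with branch point on $\bdry S$; and when $\wt{b}$ is not a loop its two endpoints are distinct copies of $v$, of equal color, so by the coloring fact they cannot bound a single boundary edge and $\wt{b}$ is nontrivial. This is the homonymous equivalence. If $b$ is heteronymous, the same parity bookkeeping shows that the $\alpha$- and $\beta$-ends of $\wt{b}$ are the two ends of a boundary edge of $\Sigma$ in $\bdry S$ exactly when all singular values with branch point on $\bdry S$ lie on one side of $b$, and otherwise occupy vertices that bound no boundary edge; by the criterion above, $\wt{b}$ is then nontrivial if and only if $b$ separates two such values. (Throughout the heteronymous clause a \emph{critical value} should be read relative to $S$, i.e.\ as a singular value whose branch point lies on $\bdry S$, matching the sheet-dependence already visible in the homonymous clause.) The main difficulty I anticipate is this heteronymous endpoint computation: determining, from the isotopy class of $b$ relative to the slits cut in $S$, exactly which copy of $v_\alpha$ and which copy of $v_\beta$ the ends of $\wt{b}$ land on, and hence exactly when those two copies are the ends of a common boundary edge of $\Sigma$; the homonymous analogue is easier, since there the coarse dichotomy ``$\wt{b}$ is a loop'' versus ``$\wt{b}$ opens into an arc'' already settles the question.
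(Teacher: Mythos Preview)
Your approach is viable but takes a different and longer route than the paper's. You build a slit model of $S$, state an endpoint-based triviality criterion, and then try to track which copies of $v_\alpha,v_\beta$ the lift lands on. The paper bypasses all of the endpoint bookkeeping by arguing directly about \emph{separation} inside the disk $S$: since $\wt{b}$ is an arc in a disk, it is nontrivial exactly when neither complementary region of $S\setminus\wt{b}$ has boundary lying entirely in $\bdry\Sigma$. For homonymous $b$ this happens iff $\wt{b}$ cuts off at least one branch point from the two boundary edges in $\bdry S$; for heteronymous $b$ it happens iff each side of $\wt{b}$ contains a branch point. In either case one reads off which branch points lie on which side of $\wt{b}$ immediately from which singular values $b$ encloses or separates downstairs, and the proof is finished in a few lines.

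Your triviality criterion is in fact correct --- the homotopy disk between $\wt{b}$ and a trivializing boundary edge cannot cross a branching arc, hence stays in $S$, forcing that edge to lie in $\bdry S$ --- and your homonymous case goes through cleanly via the coloring observation. But the ``main difficulty'' you flag in the heteronymous case is real and is exactly what the separation argument avoids: to finish your way you must reconstruct enough of the cyclic order of $\bdry S$ to decide when the two endpoint-copies bound a common boundary edge, which is essentially redoing the separation analysis in disguise. Your parenthetical reading of the heteronymous clause (critical values whose branch points lie on $\bdry S$) is the correct one and matches what the paper's proof actually establishes.
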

\begin{proof}
Since $f$ restricted to the interior of $S$ is a homeomorphism onto an open set in $\B$ which contains $b$, $b$ has a unique lift in $S$.

Suppose $b$ is homonymous. First assume $\wt{b}$ is nontrivial. Since $S$ is simply connected, either $\wt{b}$ is a branching arc in $\bdry S$ or $\wt{b}$ divides $S$ into two nonempty components. If $\wt{b}$ is a branching arc containing branch point  $u$, then $b$ is an arc which encloses $f(b)$. If $\wt{b}$ divides $S$ into two components, then one of those must contain a branch point $u$, and the other contains the boundary edges of $\Sigma$ present in $\bdry S$. Therefore, $b$ must separate $f(u)$ from $\bdry \B$ and hence $b$ encloses a singular value. Now, assume $b$ encloses a singular value $v$, whose associated branch point $\wt{v}$ is in $\bdry S$. Then since $b$ separates $v$ from $\bdry \B$, $\wt{b}$ must either separate $\wt{v}$ from $\bdry \Sigma \cap \bdry S$, or be the branching arc containing $\wt{v}$. In either case, $\wt{b}$ is nontrivial.

Now, suppose $b$ is heteronymous. First, assume $\wt{b}$ is nontrivial. Then $\wt{b}$ separates $S$ into two components. If one of them contains no branch points, and hence no branching arcs, then it must contain only $\bdry \Sigma \cap \bdry S$ and points of $\inside(S)$. But since $\bdry S$ only contains two boundary edges of $\Sigma$, then $\wt{b}$ must be homotopic to one of them, which contradicts nontriviality. So $\wt{b}$ must separate two branch points, and hence $b$ separates two singular values whose branch points are in $S$. Finally, assume $b$ separates two singular vaules whose branch points are in $S$. Then $\wt{b}$ must separate those corresponding branch points in $S$, and hence $\wt{b}$ is nontrivial.
\end{proof}

\begin{cor}
Let $(\Sigma,C,\prec,\B,B)$ be a lifting picture, and $b$ a homonymous arc in $B$ which encloses exactly one singular value $v$. Then $b$ the only nontrivial lift of $b$ is the branching arc $\wt{b}$ containing the branch point in the fiber over $b$, and $b$ lifts $\wt{b}$ twice.
\end{cor}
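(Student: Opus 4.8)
The plan is to reduce the statement to Proposition~\ref{how lifting works} together with an analysis of the $f$-preimage of a disk around $v$. Since $b$ is homonymous and the two vertices of the bigon $\B$ carry opposite colors, $b$ is a simple loop based at one of them; it separates $\B$ into two disks, and I take $C_1$ to be the one whose boundary is exactly $b$, so that by hypothesis $C_1\cap M_\B=\{v\}$. Over the annulus $C_1\setminus\{v\}$ the map $f$ restricts to a genuine covering, and since the cover $f$ is simple the monodromy of a loop around $v$ is a single transposition; hence $f^{-1}(C_1\setminus\{v\})$ has exactly one component doubly covering $C_1\setminus\{v\}$ and $\deg f-2$ components mapped homeomorphically. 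Reinstating the fiber over $v$, $f^{-1}(C_1)$ is a disk $\widetilde{C_1}$ that contains $\wt v$ and maps onto $C_1$ as a degree-two cover branched over $v$, together with $\deg f-2$ disks each mapped homeomorphically.

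The lifts of $b$ are exactly the boundary circles of these preimage disks. Each of the $\deg f-2$ homeomorphic disks contributes a loop at a vertex bounding a disk, hence a trivial arc, so the only possibly nontrivial lifts come from $\widetilde{C_1}$. Its boundary circle doubly covers the loop $b$, so it meets $f^{-1}$ of the basepoint of $b$ in two points $\tilde\alpha_1,\tilde\alpha_2$ which cut it into two arcs $\beta_1,\beta_2$, each mapping homeomorphically onto $b$. To finish I must identify each $\beta_i$ with the branching arc $\wt b$ of $C$ through $\wt v$, which is homonymous of the color of $b$. Realizing $\wt b$, as in Proposition~\ref{branching arcs exist}, as the union of the two lifts through $\wt v$ of a simple path $p$ from $v$ to the basepoint of $b$, and choosing $p$ inside $C_1$ (possible because that vertex lies on $\partial C_1=b$), the arc $\wt b$ lies in $\overline{\widetilde{C_1}}$ and runs from $\tilde\alpha_1$ through $\wt v$ to $\tilde\alpha_2$. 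Thus $\wt b$ shares both endpoints with each $\beta_i$ and cuts the disk $\widetilde{C_1}$ into two sub-disks, one bounded by $\beta_1\cup\wt b$ and the other by $\beta_2\cup\wt b$; a disk bounded by two arcs with the same endpoints is a homotopy between them, so $\beta_1\simeq\wt b\simeq\beta_2$.

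Finally, $\beta_1$ and $\beta_2$ lie in distinct sheets, since otherwise one sheet would contain two lifts of $b$, contradicting the uniqueness clause of Proposition~\ref{how lifting works}. Hence $\wt b$ occurs as the lift of $b$ in exactly two sheets and the lift is trivial in every other sheet; this is the assertion, and it is consistent with Proposition~\ref{how lifting works} in that $\wt v$ lies in the boundary of precisely those two sheets. I expect the delicate point to be the identification of each $\beta_i$ with $\wt b$ rather than with a merely parallel arc: this forces one to track the colors of the vertices involved and to arrange $p$ to stay inside $C_1$, and it is also where the count ``two sheets, not one'' gets pinned down, via the distinctness of $\beta_1$ and $\beta_2$.
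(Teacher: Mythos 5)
Your proof is correct, and it follows a genuinely different route from the paper's. The paper's proof is a two-sentence deduction: the branching arc $\wt{b}$ lies in the boundary of exactly two sheets; on each, Proposition~\ref{how lifting works} gives a nontrivial lift of $b$, which must be $\wt{b}$; and on every other sheet the same proposition shows $b$ lifts trivially because $\wt{v}$ is not in that sheet's boundary. It is a sheet-by-sheet argument that leans on the reader to accept that a branching arc bounds exactly two sheets and that the nontrivial lift is the branching arc itself. You instead analyze $f^{-1}(C_1)$ directly: using the simple-cover hypothesis to pin the monodromy around $v$ to a single transposition, you decompose the preimage of $C_1$ into one degree-two branched disk $\widetilde{C_1}$ and $\deg f - 2$ unbranched disks, immediately see that the unbranched disks give contractible (hence trivial) loop lifts, and then explicitly homotope each boundary arc $\beta_i$ of $\widetilde{C_1}$ onto $\wt{b}$ via the two sub-disks it cuts off. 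The one place you touch Proposition~\ref{how lifting works} is the (correct) observation that $\beta_1$ and $\beta_2$ must lie in distinct sheets. What your version buys is self-containment and transparency -- the ``exactly twice, not once'' count and the identification ``lift equals branching arc'' are both pinned down by explicit homotopies rather than asserted -- at the cost of being noticeably longer than the paper's appeal to the sheet structure.

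One small remark on precision: when you identify $\beta_i$ with $\wt{b}$ by the disk-bounded-by-two-arcs argument, the homotopy needs to be rel endpoints through paths missing $M_\Sigma$. This is automatic here because $M_\Sigma = \emptyset$ for the total space of a branched cover of a bigon, but it is worth saying; it is exactly the hypothesis under which a disk gives a valid homotopy of arcs.
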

\begin{proof}
There are exactly two sheets whose boundary contains $\wt{b}$, and $b$ lifts to $\wt{b}$ on each. By the proposition above, $b$ will lift trivially on every other sheet.
\end{proof}

\section{Homonymous Recursion}\label{homonymous recursion section}
In this chapter, we will show how to decide whether a fully homonymous weighted arc diagram is realizable, and how to count the ways of realizing it if so. First, we will define some operations on arc diagrams and substrates which will be required in the decision algorithm. 

\begin{define}
A homonymous arc is \emph{outer} if it forms a triangle, called an \emph{outer triangle} $\delta(c)$ with two adjacent boundary edges; $c$ is said to \emph{enclose} $\delta(c)$. If it is also of minimal weight among outer arcs, it is called \emph{least outer}. If there are also arcs $a$ and $b$ in $D$ which bound a triangle with $c$ which contains no other arcs of $D$, then that triangle is unique and called the \emph{inner triangle} $\Delta(c)$. In particular, every outer arc in a maximal diagram has an inner triangle. An inner triangle is called \emph{homonymous} is it is bounded only by homonymous arcs.
\end{define}

\begin{figure}[ht]
    \centering
    \includegraphics[width=0.6\textwidth]{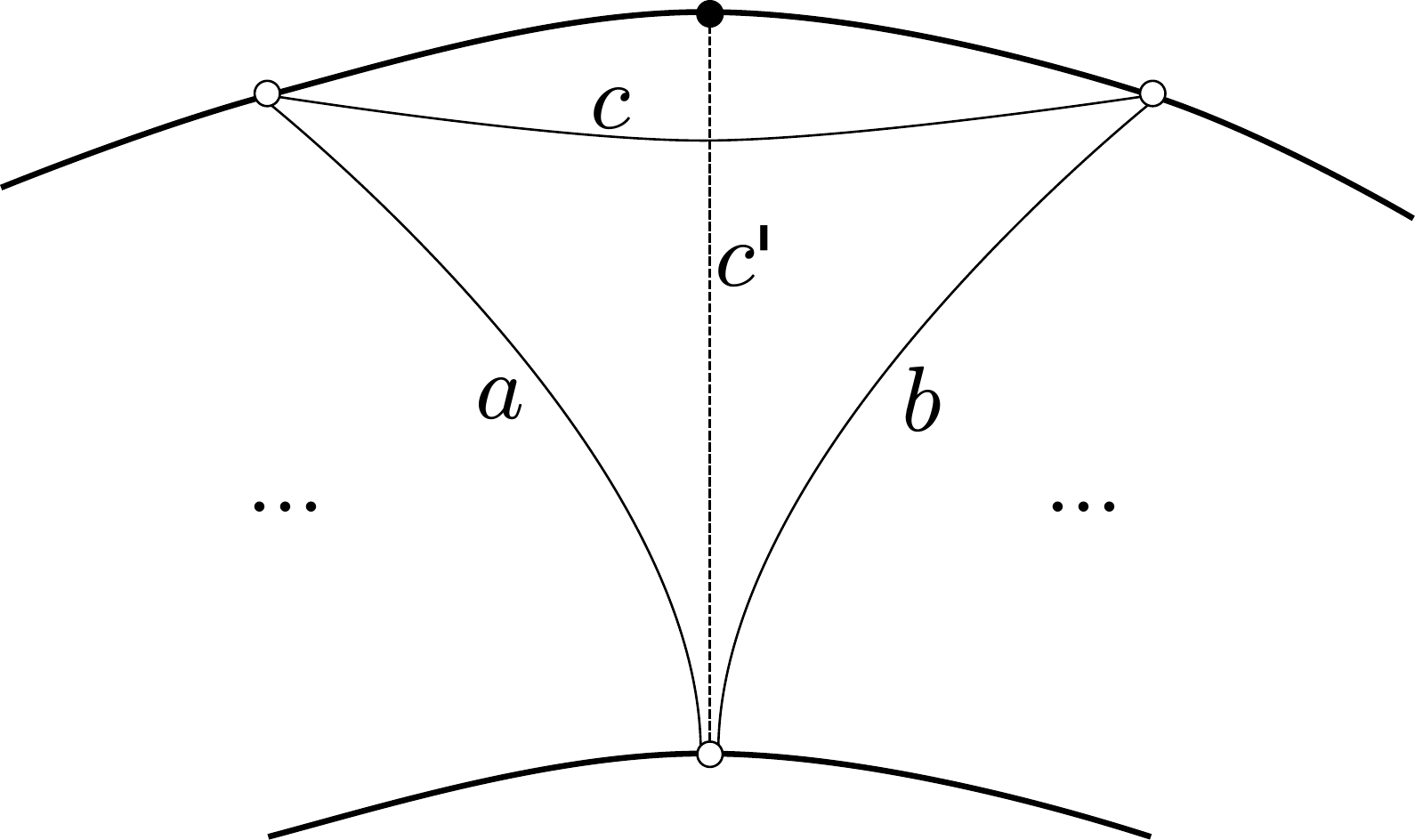}
    \caption{An outer arc $c$. $\Delta(c)$ is bounded by $a$, $b$, and $c$. The dashed arc $c^\prime$ is the dual arc of $c$.}
    \label{neartriangle}
\end{figure}
An outer arc $c$ with an inner triangle has a \emph{dual arc} $c^\prime$, which is the unique nontrivial arc crossing $c$ and connecting a vertex of $\delta(c)$ to a vertex of $\Delta(c)$. The dual arc is used to define an operation on a substrate with an arc diagram we call one seam reduction.

\begin{define}\emph{One Seam Reduction: }
Let $\Sigma$ be a substrate with an arc diagram $D$, and suppose $c$ is an outer arc with homonymous inner triangle $\Delta(c)$. Cut $\Sigma$ along the homonymous dual arc $c^\prime$; that is choose a simple representative $p^\prime$ of $c^\prime$, and let $\Sigma^\prime$ be the closure of $\Sigma \backslash c$. The \emph{one seam reduction} by $c$ is the new diagram and substrate pair $(\Sigma^\prime,D \backslash c)$.
\end{define}
\begin{figure}[ht]
    \centering
    \includegraphics[width=0.7\textwidth]{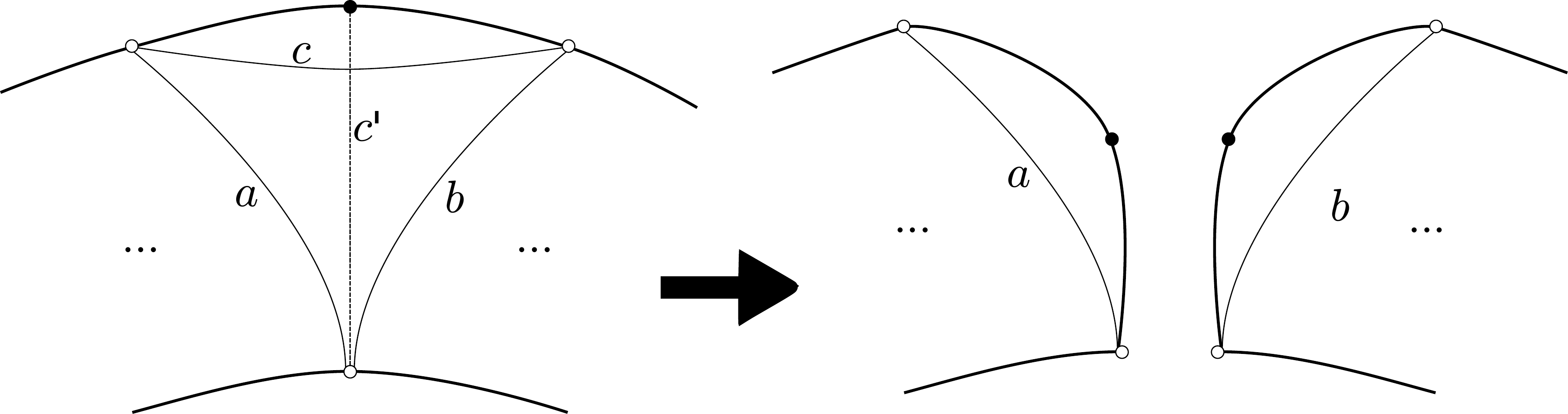}
    \caption{Before and after a one seam reduction by $c$.}
    \label{oneseamreduction}
\end{figure}
By performing a one seam reduction, you get a new arc and (possibly disconnected) substrate with a new, maximal diagram containing one fewer arc. Two components of the substrate which were separated by a one seam reduction performed on a triangle $T$ are said to \emph{meet} at $T$. Since the two legs of $T$ are both incident on a vertex, one is to the left of the other in the canonical order. If components $\Sigma^\prime$ and $\Sigma^{\prime\prime}$ meet at $T$ and $\Sigma^\prime$ contains the left leg of $T$, then $\Sigma^\prime$ meets $\Sigma^{\prime\prime}$ on the left at $T$; otherwise, it meets $\Sigma^{\prime\prime}$ on the right at $T$.

Call the reverse operation a \emph{one seam join}.

\begin{algo}\label{homonymous recursion}
\emph{Homonymous Recursion: }\\
INPUT: A substrate $\Sigma$ with a fully homonymous arc diagram $D$.\\
OUTPUT: A lifting picture $L$ realizing $D$ if one exists.\\
PROCEDURE: The base case is when $\Sigma$ is a square. $D$ has only one arc $d$, with weight $w$ and color $c$. Let $\B$ be a bigon with $M_\B$ containing a single point $p$. Let $B$ be an arc diagram consisting of a single homonymous arc $b$ with color $c$ enclosing $p$, and with weight $w/2$. The branching diagram is just $D$. Return the lifting picture $(\Sigma,D,\B,B)$.

Suppose $\Sigma$ is not a square. Let $\out(D)$ be the set of least outer arcs of $D$, which all have weight $w_\out$. Perform a one-seam reduction along each element of $\out(D)$. If the resulting substrate is connected, return that $D$ is not realizable. Order the components of the resulting substrate $\Sigma_1,\ldots,\Sigma_k$ so that if $\Sigma_i$ meets $\Sigma_j$ on the left anywhere, then $i<j$. If no such order exists, return that $D$ is not realizable. One may find this order using, for example, a depth first search. This is a special case of a topological sort, and can be done in linear time; see section 22.4 of \cite{cormen}. Perform this procedure recursively on each $\Sigma_i$. 

If none of these return that the diagram is not realizable, then they return lifting pictures $L_1,\ldots,L_k$. Let $B$ be the diagram containing diagrams $B_1,\ldots,B_k$ as subdiagrams arranged in order, with one extra arc of weight $w_\out$ enclosing all of them. The branching diagram $C$ is the union $C_1 \cup \ldots \cup C_k$ from $L_1,\ldots,L_k$. The order $\prec$ on $C$ is defined by requiring it to extend each $\prec_i$ on $C_i$; and if $c \in C_i,c^\prime \in C_j$, $i<j$, then $c \prec c^\prime$.  Return the lifting picture $(\Sigma,C,\prec,\B,B)$. $\Diamond$
\end{algo}

\begin{lemma}
Let $(\Sigma,C,\prec,\B,B)$ be a lifting picture, and suppose $b \in B$ is a homonymous arc of color $c$ which encloses every singular value. If $\Sigma$ is not a square, then the set of lifts of $b$ is the set of outer arcs of color $c$ in $\Sigma$. If $\Sigma$ is not a square, then $b$ lifts to each one once.
\end{lemma}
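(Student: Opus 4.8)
The plan is to take the lift of $b$ apart sheet by sheet, using Proposition~\ref{how lifting works}, and to match the pieces against the outer arcs of color $c$ by a counting argument. I would begin with two preliminary bookkeeping facts. First, $\Sigma$ is a polygon: its sheets are disks (Proposition~\ref{sheets are simply connected}) glued in pairs along the branching arcs, so $\Sigma$ is itself a disk; writing $d=\deg f$ and $N=|V_\Sigma|$, the restriction of $f$ to the circle $\bdry\Sigma$ is a $d$-fold cover of the circle $\bdry\B$, so $N=2d$, and a point of $\B$ lying off $M_\B$ and off the images of all branching arcs has exactly one preimage in each sheet, so there are exactly $d$ sheets. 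Second, a nontrivial homonymous arc of color $c$ in a polygon is outer precisely when its two endpoints are the neighbors of a common vertex (necessarily of the opposite color), and when $N\ge 6$ there are exactly $N/2=d$ such arcs; when $\Sigma$ is a square there is only one, and when $\Sigma$ is a bigon the claim is vacuous --- this is the only place the hypothesis ``$\Sigma$ is not a square'' is really needed.

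For the inclusion of $\lift(b)$ into the set of outer arcs of color $c$, I would first fix a convenient representative: being homonymous of color $c$, the arc $b$ is a loop at the color-$c$ vertex of $\B$, and since it encloses every singular value it cuts $\B$ into a disk $R$ containing $M_\B$ and a triangular region $R'$, bounded by $b$ and the two boundary edges of $\B$, with $R'\cap M_\B=\emptyset$. By Proposition~\ref{how lifting works}, $b$ has a unique lift $\wt{b}_S$ in each sheet $S$, and $\wt{b}_S$ is nontrivial as soon as $\bdry S$ contains a branch point; since $\Sigma$ is not a square every sheet carries a branching arc, hence a branch point, on its boundary, so every $\wt{b}_S$ is nontrivial. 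It is moreover outer of color $c$: it is homonymous of color $c$ because $f$ respects the $2$-coloring, and the component analysis in the proof of Proposition~\ref{how lifting works} shows that on the side of $\wt{b}_S$ away from the branch points of $\bdry S$ one finds a disk containing both boundary edges of $\Sigma$ in $\bdry S$ and no branch points --- hence no branching arcs --- so that disk is bounded by $\wt{b}_S$ together with those two (adjacent) boundary edges, i.e.\ is an outer triangle for $\wt{b}_S$.

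Next I would show that $S\mapsto\wt{b}_S$ is injective. If $\wt{b}_S=\wt{b}_{S'}=:a$, then the outer triangle of $a$ produced inside $S$ must be all of $\delta(a)$ (the unique side of $a$ in $\Sigma$ that is a triangle), since it is a subdisk of that side whose entire boundary lies on the boundary of that side; the same holds inside $S'$, so $\delta(a)$ would lie in the interiors of two distinct sheets, which is absurd. Thus $S\mapsto\wt{b}_S$ embeds the $d$-element set of sheets into the set of outer arcs of color $c$, which (for $\Sigma$ not a square) also has $d=N/2$ elements, so the map is a bijection. Therefore $\lift(b)$ is exactly the set of outer arcs of color $c$, and each such arc arises as $\wt{b}_S$ for a single sheet $S$, i.e.\ $b$ lifts to it exactly once. (When $\Sigma$ is a square the two sheets are glued along the single branching arc and both lift $b$ to it, which is the origin of the factor of $2$ in the base case of Algorithm~\ref{homonymous recursion}.)

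The step I expect to be the main obstacle is the identification in the second paragraph --- arguing cleanly that $\wt{b}_S$, rather than some larger arc, is precisely an outer arc --- together with the bookkeeping for the degenerate possibilities (the chosen representative of $b$ meeting images of branching arcs, or $\wt{b}_S$ being represented by a branching arc, in which case its outer triangle is the whole sheet). The cleanest way through is to invoke the local ``over-$R$ versus over-$R'$'' component analysis already carried out in the proof of Proposition~\ref{how lifting works} rather than to reprove it; the counting input, although elementary, also needs care, as it is exactly the point at which a square behaves differently from the other surfaces.
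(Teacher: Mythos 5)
Your sheet-by-sheet analysis --- the unique lift in each sheet via Proposition \ref{how lifting works}, the argument that this lift is an outer arc of color $c$, and the injectivity argument through uniqueness of the outer triangle $\delta(a)$ --- tracks the paper's proof closely; the paper packages the same information by introducing $\omega(S)$, the unique outer arc whose outer triangle lies in $S$, showing $\wt b = \omega(S)$, and noting $\omega(S)\neq\omega(S')$ for $S\neq S'$ when $\Sigma$ is not a square. Where you diverge is in how you close the argument: you establish that the map from sheets to outer arcs of color $c$ is onto by a cardinality count ($d$ sheets, $d$ outer arcs in a polygon with $N=2d$ vertices), whereas the paper reaches the set equality without any global count, essentially because every outer arc's outer triangle, containing no branching arcs, already sits inside some sheet and is therefore the $\omega$ of that sheet.

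The counting route, however, rests on a claim you assert but do not correctly justify: that $\Sigma$ is a polygon. Your stated reason --- ``its sheets are disks glued in pairs along the branching arcs, so $\Sigma$ is itself a disk'' --- is not sound. Gluing disks along interior arcs yields a disk only when the gluing graph (sheets as vertices, branching arcs as edges) is a tree, which happens precisely when $\chi(\Sigma)=1$; in general a simple branched cover of a bigon need not be a disk. Concretely, take two squares and identify two pairs of opposite sides: the result is an annulus, and it arises as a degree-$2$ simple branched cover of a bigon with two branch points, with a valid $2$-arc branching diagram (each sheet is simply connected with two boundary edges, and $n=|V_\Sigma|/2-\chi(\Sigma)=2$ as required by Proposition \ref{branch data}). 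For such a $\Sigma$ the formulas $N=2d$ and $\#\{\text{outer arcs of color }c\}=N/2$ no longer match the number of sheets, so the pigeonhole step collapses. You should either verify that the lemma's hypotheses (in the context where it is actually applied, namely the homonymous recursion on a polygon) do force $\Sigma$ to be a disk, or --- better --- avoid the count entirely by arguing surjectivity directly: every outer arc of color $c$ bounds an outer triangle disjoint from the branching diagram, hence contained in a single sheet $S$, hence equal to $\omega(S)$. That is the route the paper's $\omega(S)$ bookkeeping is set up to make immediate, and it works without knowing $\chi(\Sigma)$.
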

\begin{proof}
Each sheet $S$ contains a unique outer arc $\omega(S)$ whose outer triangle is contained in $S$. Equivalently, $\omega(S)$ separates the branching arcs in $\bdry S$ from $\bdry \Sigma \cap \bdry S$. By Proposition \ref{how lifting works}, $b$ has a unique nontrivial lift $\wt{b}$ on every sheet $S$. Since $b$ separates all singular values from $\bdry \B$, $\wt{b}$ separates the branching arcs of $\bdry S$ from $\bdry \Sigma \cap S$, so $\wt{b}=\omega(S)$.

If $\Sigma$ is not a square, then no two sheets $S,S^\prime$ can have $\omega(S)=\omega(S^\prime)$.
\end{proof}

\begin{theorem}\label{homonymous recursion works}
Let $D$ be a maximal, homonymous diagram on substrate $\Sigma$. Homonymous recursion returns a lifting picture realizing $D$ if and only if $D$ is realizable.
\end{theorem}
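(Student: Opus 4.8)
The argument will proceed by induction on the number of arcs of $D$, the base case being $\Sigma$ a square. I will prove the two implications of the biconditional together: \emph{soundness}, that any lifting picture the algorithm outputs genuinely realizes $D$; and \emph{completeness}, that if $D$ is realizable the algorithm reaches none of its ``not realizable'' exits. In the base case $\Sigma$ is a square, $D$ is a single homonymous arc $d$ of weight $w$ and color $c$, and the algorithm outputs the degree-two cover of the bigon branched at one point, whose two sheets are the triangles into which $d$ cuts the square; the corollary to Proposition \ref{how lifting works} then shows the bigon arc (color $c$, weight $w/2$) lifts to $d$ twice and to nothing else, so $d$ ends up with weight $w$ and the picture realizes $D$. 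Completeness is immediate here because a square has an essentially unique branched cover of the bigon.

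For the inductive step of soundness, suppose $\Sigma$ is not a square and the recursive calls returned lifting pictures $L_1,\dots,L_k$ realizing the diagrams $D_1,\dots,D_k$ produced by one-seam reducing along the least outer arcs $\out(D)$. Arcs of the assembled bigon diagram $B$ lift independently, so I will treat the arcs inherited from each $B_i$ and the extra enclosing arc $\hat b$ separately. For the $B_i$ I will show that one-seam join is compatible with lifting: the cover determined by $(C,\prec)$, when restricted to the part of $\Sigma$ glued from $\Sigma_i$ and to the sub-bigon containing $B_i$, is equivalent to the cover coming from $L_i$, so $B_i$ lifts inside $\Sigma$ to exactly $D_i$ with the weights $w_{D_i}$. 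For $\hat b$, which is homonymous of color $c$ and encloses every singular value, the lemma preceding this theorem (applicable because $\Sigma$ is not a square) says it lifts once onto each outer arc of color $c$; since the arcs of $\out(D)$ are precisely the ones deleted by the reductions and reinstated by these lifts, and their common weight $w_\out$ is what $\hat b$ is assigned, the weight contributions of $\hat b$ and of the $B_i$ must be shown to sum to $w_D$ on every arc of $D$. This is the step where the minimality of $w_\out$ and the chosen order on the components $\Sigma_1,\dots,\Sigma_k$ are used.

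For the inductive step of completeness, assume $D$ is realized by some lifting picture; after adjoining weight-zero arcs we may take its branching diagram to be a subset of $D$. The key is a reverse compatibility: such a realizing cover restricts to branched covers exhibiting each $D_i$ as a lift, so the inductive hypothesis makes the recursive calls succeed; the same cover forces the substrate obtained by reducing along $\out(D)$ to be disconnected (a connected outcome would require a sheet pattern no simple branched cover of the bigon admits), and its order datum restricts to an order on each $C_i$ while, via the canonical orders at the vertices where the pieces meet, giving a valid order of $\Sigma_1,\dots,\Sigma_k$. Hence no failure exit is reached.

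The step I expect to be hardest is the pair of compatibility statements linking one-seam reduction and join with lifting, and in particular the soundness bookkeeping: one must verify that $\hat b$ contributes weight $w_\out$ to exactly the arcs that were removed, which means understanding precisely how the sheet decomposition of $\Sigma$ relative to $C$ is assembled from those of the $\Sigma_i$, and why reducing along \emph{all} least outer arcs simultaneously is exactly what makes a single assigned weight $w_\out$ correct for all of them at once. Checking that one-seam reduction preserves maximality so the inductive hypothesis applies, and that the orders $\prec$ glue correctly across the join, are the remaining, more routine points.
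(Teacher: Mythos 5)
Your proposal tracks the paper's own proof at the level of strategy: induction on $\lvert D\rvert$, the square base case handled by the corollary to Proposition \ref{how lifting works}, one-seam reduction along the least outer arcs, the preceding lemma to handle the extra enclosing arc, and a separate soundness/completeness split. Where you and the paper diverge is that you stop at exactly the points you flag as hardest, and one of them is not mere bookkeeping but a load-bearing idea you have not yet supplied. You write that one must ``verify that $\hat b$ contributes weight $w_\out$ to exactly the arcs that were removed.'' That is the wrong target: by the lemma, $\hat b$ lifts once to \emph{every} outer arc of its color, not only the least ones. The paper's fix is to pass, before recursing, from $D\setminus\out$ to a diagram $D'$ in which \emph{every remaining outer arc} has its weight reduced by $w_\out$; then the recursive calls realize $D'$ exactly, and $\hat b$'s uniform contribution of $w_\out$ to all outer arcs restores the original weights, including reconstituting the deleted arcs of $\out(D)$ at weight $w_\out$. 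Without that subtraction your ``sum to $w_D$'' check fails on any outer arc of weight strictly greater than $w_\out$. The paper also supplies a concrete argument you leave as a compatibility claim: that $C=C_1\cup\dots\cup C_k$ is a valid branching diagram because any sheet of $\Sigma\setminus C$ not already inside some $\Sigma_i$ must be the one-seam join of exactly two sheets (each sheet of $\Sigma_i$ has a unique outer arc to join along), hence simply connected with two boundary edges.

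On completeness, your sketch (``such a realizing cover restricts to branched covers exhibiting each $D_i$ as a lift'') is the right intuition but is asserted rather than argued, and the preliminary move of adjoining weight-zero arcs to force the branching diagram into $D$ is neither used by the paper nor obviously sufficient by itself. The paper instead argues directly: take the arc $\omega\in B_R$ enclosing $M_\B$, let $A$ be the nontrivial arcs in $\B$ crossing $\omega$ and disjoint from the rest of $B_R$, observe that the nontrivial lifts of arcs in $A$ are precisely the dual arcs of the least outer arcs of $D$, and conclude that cutting along these lifts disconnects $\Sigma$ with a consistent left/right orientation on the pieces, so the algorithm's reduction step and topological sort both succeed; termination then follows because each $\Sigma_i$ has strictly fewer vertices. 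You should either carry out a restriction argument of the kind you gesture at, or adopt the paper's argument via the set $A$; as it stands your completeness step is an outline, not a proof.
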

\begin{proof}
Suppose the lifting picture $L=(\Sigma,C,\prec,\B,B)$ is the result of performing homonymous recursion on $D$. We must show that $L$ realizes $D$. We proceed by induction on the number of arcs $|D|$ in $D$. Suppose $|D|=1$. Then $\Sigma$ must be a square. It follows from Proposition \ref{how lifting works} and its corollary that $L$ as returned by the base case of homonymous recursion realizes $D$. Now, assume that for any maximal, homonymous arc diagram $D$ with $|D|<N$ for some $N$, that if homonymous recursion returns a lifting picture, then that lifting picture realizes $D$. Suppose $D$ is a maximal, homonymous diagram with $|D|=N$. 

Let $\out$ be the set of least weighted outer arcs of $D$, and let $w_\out$ be the weight of any one of these arcs. Let $D^\prime$ be $D \backslash \out$ with the weights of all outer arcs reduced by $w_\out$, and define $\Sigma_1,\ldots,\Sigma_k$ to be the components resulting from performing a one seam reduction along each arc in $\out$. We then have maximal diagrams $D_1,\ldots,D_k$ which are the restriction of $D^\prime$ to each $\Sigma_i$. Since homonymous recursion finishes when applied to $D$, that means that $k \geq 2$ and we have lifting pictures $L_i=(\Sigma_i,C_i,\prec_i,\B_i,B_i)$, $1 \leq i \leq k$. By the inductive assumption, $L_i$ realizes $D_i$, since $|D_i|<N$. 

We now must show $C=C_1 \cup \ldots \cup C_k$ is a valid branching diagram. By construction, and using the assumption that homonymous recursion successfully terminated, the order of $\Sigma_1,\ldots,\Sigma_k$ is compatible with the canonical order at each vertex. Therefore, by induction, the order on $C$ extends the canonical order at each vertex, as it must. It remains to show that each component of $\Sigma \backslash C$ is simply connected and contains two edges of $\bdry \Sigma$. Let $S$ be a component of $\Sigma \backslash C$. If $S$ is contained in some $\Sigma_i$, then it is a valid sheet by the inductive hypothesis. Otherwise, $S$ is the result of performing one seam joins on sheets in various $\Sigma_i$'s. But since a sheet of any $\Sigma_i$ contains a unique outer arc along which to perform a one seam join, $S$ must be the result of performing a one seam join on two sheets, meaning that $S$ is simply connected and has the correct boundary. So $C$ is a valid branching diagram. 

Now we must show that $L$ realizes $D$. If $b_i$ is an arc in $B_i \subset B$, then by prop \ref{how lifting works} all nontrivial lifts will be in $\Sigma_i$, since the branch points mapped to the singular values enclosed by $b_i$ are all contained in $\Sigma_i$. Therefore, $B_i$ lifts to $D_i$ in the combined lifting picture $L$. Then the lift of $B_1 \cup \ldots \cup B_k = B \backslash \omega$, where $\omega$ is the unique arc in $B$ which encloses all of $M_\B$, is $D_1 \cup \ldots \cup D_k = D^\prime$. Since $\omega$ lifts to every outer arc once by the lemma, $L$ realizes $D$.

Finally, suppose $D$ is a realizable, maximal, homonymous weighted arc diagram on $\Sigma$. We need to show that homonymous recursion returns a lifting picture when applied to $D$. Suppose $R=(\Sigma,C_R,\prec,\B,B_R)$ is a lifting picture which realizes $D$. Let $\omega$ be the arc in $B$ which encloses $M_\B$, and consider the set $A$ of nontrivial arcs in $\B$ which cross $\omega$ and which are disjoint from the rest of $B$. The set of nontrivial lifts of arcs in $A$ is exactly the set of dual arcs of the least outer arcs of $D$. Since the arcs of $A$ divide $\B$ into multiple components, and these arcs are disjoint from the singular values, the set of lifts of these arcs must divide $\Sigma$ into at least 2 components, and these components will have a consistent left-right orientation as required in the algorithm. Therefore, one seam reduction will take place, and the algorithm will recurse onto the components. Finally, we need only observe that each component $\Sigma_i$ will have fewer boundary marked points than $\Sigma$, so therefore this process must terminate, and homonymous recursion thus returns a lifting picture.
\end{proof}

\section{Heteronymous Arcs}\label{heteronymous section}
To address the question of realizability for more general arc diagrams, we will first treat another special case.

\begin{define}
A maximal arc diagram on a substrate $\Sigma$ is \emph{maximally heteronymous} if it contains exactly $|V_\Sigma|/2-\chi(\Sigma)$ homonymous arcs.
\end{define}
A maximally heteronymous diagram has only one possible choice of branching diagram. The only freedom is in choosing a total order. As in the homonymous case, each sheet imposes an ordering on the branching arcs which form its boundary. A sheet of a maximally heteronymous diagram will be of the form shown in Figure \ref{typical het sheet}.
\begin{figure}[ht]
    \centering
    \includegraphics[width=0.3\textwidth]{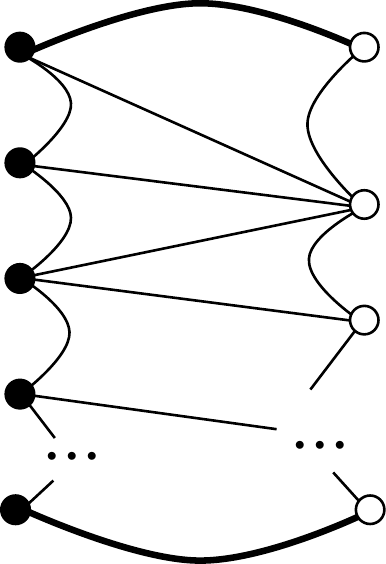}
    \caption{A typical sheet of a maximally heteronymous diagram.}
    \label{typical het sheet}
\end{figure}
For each color $c$, the canonical order at vertices orders the branching arcs of color $c$ in a sheet $S$. If $D$ is also maximal, then the heteronymous arcs contained in $S$ impose a canonical order on all branching arcs in $\bdry S$. Globally fix a color $c$. Orient each heteronymous arc so it points out of its $c$-colored endpoint. Each heteronymous arc separates $S$ into two components, and with the orientation one of these will be on the left, and the other on the right. Define an order $<_c$, on the homonymous arcs bounding $S$ by requiring $x <_c y$ if $x$ comes before $y$ in the canonical order at a common endpoint; or if there exists a heteronymous arc $h$ so that $x$ is on the left of $h$ and $y$ is on its right. Since $D$ is maximal, every pair of homonymous arcs in $S$ is separated by at least one heteronymous arc, so this is a total order on the branching arcs which bound $S$. Note that if we choose the opposite color $c^\prime$, then $x <_c y$ if and only if $y <_{c^\prime} x$. 

\begin{prop}
Let $D$ be a maximal, maximally heteronymous arc diagram on a substrate $\Sigma$. If the lifting picture $(\Sigma,C,\prec,\B,B)$ realizes $D$, then the order $\prec$ on $C$ extends $<_c$. 
\end{prop}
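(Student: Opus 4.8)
The plan is to reduce to a single sheet, dispose of the "canonical-order" clause in the definition of $<_c$ immediately, and spend the real effort on the clause involving a heteronymous arc, which I will handle by projecting that arc down to the bigon and reading the order of the marked points off the branching diagram.

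First, because $D$ is maximal and maximally heteronymous it admits only one branching diagram, namely its set $C$ of homonymous arcs, and this is also the branching diagram of the given lifting picture; so the sheets appearing in the definition of $<_c$ are exactly the components of $\Sigma\setminus C$, i.e. the sheets of the given cover. Since $\prec$ is a total order, to show it extends $<_c$ it suffices to show that for every sheet $S$ and every pair $x,y$ of branching arcs in $\bdry S$ with $x<_c y$ witnessed by one of the two defining clauses, we have $x\prec y$. If $x$ precedes $y$ in the canonical order at a shared endpoint $v$, this is immediate: $\prec$ is the order attached to a branching diagram, so by Proposition \ref{branch data} it agrees with the canonical order at $v$.

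So fix a sheet $S$ and a heteronymous arc $h$ of $D$ contained in $S$, with $x$ on the left of $h$ and $y$ on its right. By Proposition \ref{sheets are simply connected}, $f$ restricts to a homeomorphism from $\inside(S)$ onto $\inside(\B)$ with the images of the branching arcs in $\bdry S$ deleted; extend this continuously over $\bar S$. Then $f$ carries $h$ to a simple heteronymous arc $\bar b$ joining the two vertices of $\B$, and $\bar b$ meets each branching path $f(x')$, $x'\subset\bdry S$, only at vertices of $\B$, since $h$ — as an arc of $D$ — may be chosen disjoint from all branching arcs away from their endpoints. Orient $\bar b$ out of its $c$-colored vertex and let $H_L,H_R$ be its two sides. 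Since $f$ preserves the interior-on-the-left convention along the boundary and is a homeomorphism on $\inside(S)$, it sends the left side of $h$ to $H_L$; hence a branching arc $x'\subset\bdry S$ lies on the left of $h$ precisely when its associated singular value $m_{x'}$ — the unique one whose branch point lies on $x'$, by Proposition \ref{branching arcs exist} — lies in $H_L$. In particular $m_x\in H_L$ and $m_y\in H_R$, so it is enough to prove that, among $\{m_{x'}:x'\subset\bdry S\}$, those in $H_L$ form an initial segment of $\prec$.

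To see this, cut $\B$ open along the disjoint branching paths $f(x')$, $x'\subset\bdry S$; the result is again a disk, and each $m_{x'}$ becomes a point of its boundary. By the construction of a cover from a branching diagram (Proposition \ref{branch data}), the canonical order at the two vertices of $\B$ agrees with $\prec$, so traversing the boundary of the cut-open disk from the $c$-vertex to the other vertex along the side on which $\inside(\B)$ lay meets the $m_{x'}$ in the order $\prec$. Since $\bar b$ avoids the interiors of all the branching paths it descends to a simple arc between the two vertices of this disk, which is unique up to homotopy rel endpoints and therefore separates the $m_{x'}$ into an initial part (its left side) and a final part (its right side). Thus $m_x\prec m_y$, as required. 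The delicate point, and the one I expect to be the main obstacle, is exactly this last step: checking against the orientation conventions that "left of $\bar b$" corresponds to the \emph{initial} rather than the final segment, and that cutting along $C$ truly linearizes the marked points in the order $\prec$; the swap of "left" with "initial" under $c\leftrightarrow c'$ is precisely the identity $x<_c y\iff y<_{c'}x$ already recorded. As a byproduct the argument shows that when $D$ is realizable the per-sheet orders $<_c$ are all refined by a common $\prec$, hence mutually consistent, so $<_c$ is genuinely a partial order on $C$.
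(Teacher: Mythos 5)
Your approach is genuinely different from the paper's: the paper argues by contradiction — if $\prec$ does not extend $<_c$, it exhibits a heteronymous arc $h$ with $x$ on its left, $y$ on its right but $y\prec x$, and then invokes Proposition \ref{how lifting works} to conclude that no arc of $B$ could possibly lift to $h$, contradicting realizability — while you argue directly, pushing $h$ down to $\bar b = f(h)$ on the bigon and trying to read off $x\prec y$ from the position of $\bar b$ among the branch cuts. These are essentially contrapositive formulations, but the work that needs to be done is distributed very differently.

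The problem is that the step you yourself flag as ``the delicate point'' --- that the singular values in $H_L$ form an \emph{initial} segment of $\prec$, with all orientation conventions matching up --- is not a detail to be checked at the end; it is the entire mathematical content of the non-canonical-order case, and you do not actually prove it. Notice that nowhere in your argument do you use the hypothesis that $B$ \emph{lifts to} $D$. You use only that $h$ is an arc of $D$ (to place it inside a sheet, disjoint from the branching arcs) and that $C$ is the branching diagram. But the statement is genuinely false without the realizability hypothesis: for a given branching diagram $C$ there are in general several total orders $\prec$ compatible with the canonical orders at the vertices, giving inequivalent branched covers, and only some of them extend the $<_c$ determined by a particular maximal $D$; the others correspond to covers under which no arc downstairs lifts to $h$ at all. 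In the paper's proof, realizability enters exactly at the sentence ``Therefore, no arcs in $B$ can lift to $h$, and therefore $(\Sigma,C,\prec,\B,B)$ does not realize $D$.'' In yours it must be what pins down the interleaving of the $\alpha$-colored and $\beta$-colored branch cuts inside the bigon (equivalently, which corner of the cut-open disk is ``first'' in $\prec$), and that is precisely the unproved initial-segment claim. As written, the argument would ``prove'' the conclusion for any order $\prec$ compatible with the canonical orders, which is too strong. You need to locate where the existence of an arc in $B$ lifting to $h$ is forcing the cut-open-disk geometry, or else restructure along the paper's contradiction route where that fact is used head-on.
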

\begin{proof}
Assume to the contrary that $\prec$ does not extend $<_c$. Then on some sheet $S$ there is a pair of branching arcs, $x$ and $y$, and a heteronymous arc $h$, such that $x$ is left of $h$ and $y$ is on the right, but $y \prec x$. If $x$ and $y$ have the same colored endpoints, then this reverses the canonical order at a vertex, so $(\Sigma,C,\prec,\B,B)$ doesn't define a branched cover. Otherwise, since $y \prec x$, there is no arc in $B$ which separates the singular values $v_x$ from $v_y$ associated with $x$ and $y$, respectively, and puts $v_x$ to the left of $v_y$. Therefore, no arcs in $B$ can lift to $h$, and therefore $(\Sigma,C,\prec,\B,B)$ does not realize $D$.
\end{proof}

If a maximal, maximally heteronymous diagram $D$ is realizable, then it must be realized by a maximal, maximally heteronymous diagram, which on the bigon has the following form.
\begin{figure}[ht]
    \centering
    \includegraphics[width=0.5\textwidth]{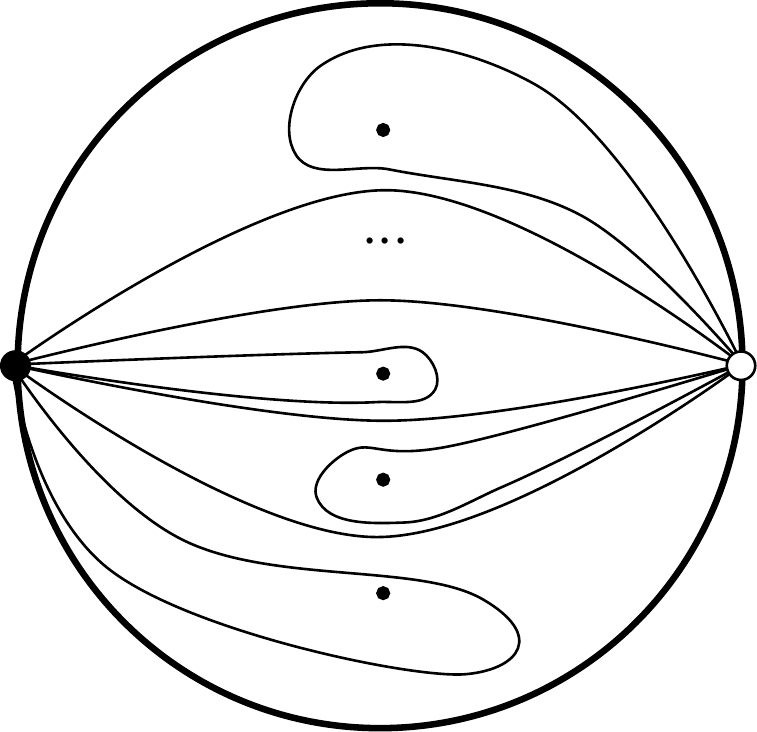}
    \caption{A maximal, maximally heteronymous diagram on the bigon. A heteronymous arc separates each pair of homonymous arcs.}
    \label{het bigon}
\end{figure}
Suppose that $a$ is a heteronymous arc in $D$, and let $f: \Sigma \to \B$ be a branched cover. $a$ is inside a sheet $S$ bounded by branching arcs, which have canonical order $<_c$. $a$ divides the branching arcs in $\bdry S$ into two sets $X$ and $Y$, with $X <_c Y$. Let $x$ be the greatest element of $X$ and $y$ be the least element of $Y$. All the hetermonymous arcs between $f(x)$ and $f(y)$ will lift to $a$. 

Therefore, to realize a maximal, maximally hetermonymous diagram $D$ is to find a total order on the homonymous arcs of $D$ which agrees with $<_c$ on each sheet and apportions weight to the heteronymous arcs of a diagram on the bigon of the form shown in Figure \ref{het bigon} so that the total weight in the bigon between any arc and its successor in a given sheet is exactly the weight of the heteronymous arc upstairs which separates them on that sheet. We will build this procedure from several more elementary algorithms.

We can represent a sheet $S$ by a list of the form 
\begin{equation}\label{list}
b_0,w_1,b_1,w_2,b_3,\ldots,w_k,b_k
\end{equation}
where the $b_i$ are the branching arcs in canonical order, and $w_i$ is the weight of the heteronymous arc between $w_{i-1}$ and $w_i$.

\begin{algo}\label{free merge}
\emph{Free Merge.}\\
INPUT: Lists $X=w_1^X,x_1,\ldots,w_m^X,x_m$ and $Y=w_1^Y,y_1,\ldots,w_n^Y,y_n$ of the form in eq. \ref{list} with $\{x_1,\ldots,x_m\} \cap \{y_1,\ldots,y_n\} = \emptyset$. \\
OUTPUT: A list $w_1^Z,z_1,\ldots,w_{m+n}^Z,z_{m+n}$.\\
PROCEDURE: The algorithm is defined recursively. The base case is when $X$ or $Y$ is empty. In that case, return the nonempty list.

If $X$ and $Y$ are both nonempty, let $w_1^Z=\min\{w_1^X,w_1^Y\}$. If $w_1^X<w_1^Y$, then let $X^\prime=w_2^X,\ldots,x_m$, $Y^\prime=w_1^Y-w_1^X,y_1,\ldots,y_n$, and $z_1=x_1$. Otherwise, let $X^\prime=w_1^X-w_1^Y,x_2,\ldots,x_m$, $Y^\prime=w_2^Y,y_2,\ldots,y_n$, and $z_1=y_1$. Return $Z=w_1^Z,z_1||FreeMerge(X^\prime,Y^\prime)$ where $||$ denotes concatenation. $\Diamond$
\end{algo}

\begin{prop}\label{free merge works}
Algorithm \ref{free merge}, when applied to lists $X=w_1^X,x_1,\ldots,w_m^X,x_m$ and $Y=w_1^Y,y_1,\ldots,w_n^Y,y_n$ returns a list $Z=w_1^Z,z_1,\ldots,w_{m+n}^Z,z_{m+n}$ satisfying the following properties:
\begin{enumerate}
\item $\{z_1,\ldots,z_{m+n}\}=\{x_1,\ldots,x_n\} \cup \{y_1,\ldots,y_m\}$
\item If $z_i=x_a$ and $z_j=x_b$, then $a<b$ implies $i<j$; likewise if $z_i=y_a$ and $z_j=y_b$, then $a<b$ implies $i<j$.
\item $w_1^X=\sum_{i=1}^k w_i^Z$, where $z_k=x_1$. Similarly, $w_1^Y=\sum_{i=1}^k w_i^Z$, where $z_k=y_1$.
\item For each $1 \leq i \leq m$, let $z_a=x_{i}$ and $z_b=x_{i+1}$. Then $w_{i+1}^X=\sum_{j=a}^b w_j^Z$, and likewise for $z_a=y_{i-1}$ and $z_b=y_{i}$.
\end{enumerate}
\end{prop}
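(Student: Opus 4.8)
The plan is to prove all four properties simultaneously by induction on $m+n$, the total length of the two input lists, mirroring the recursive structure of the Free Merge algorithm itself. The base case is when $X$ or $Y$ is empty: then the algorithm returns the nonempty list unchanged, and all four properties hold vacuously or trivially (property 1 is immediate, property 2 reduces to the fact that the returned list preserves its own order, and properties 3 and 4 say that the weight between consecutive elements equals itself). For the inductive step, assume $X$ and $Y$ are both nonempty and that the proposition holds for any pair of lists of smaller total length; in particular it holds for the pair $(X', Y')$ passed to the recursive call, whose total length is $m+n-1$.

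Next I would split into the two symmetric cases according to whether $w_1^X \le w_1^Y$ or $w_1^X > w_1^Y$; by the symmetry of the construction (and the symmetry of the four claimed properties under swapping the roles of $X$ and $Y$) it suffices to treat one, say $w_1^X < w_1^Y$, with the equality case handled like the ``otherwise'' branch. Here $Z = w_1^Z, z_1 \,||\, \mathrm{FreeMerge}(X', Y')$ with $w_1^Z = w_1^X$, $z_1 = x_1$, $X' = w_2^X, x_2, \ldots, x_m$, and $Y' = (w_1^Y - w_1^X), y_1, \ldots, y_n$. I would then verify each property of $Z$ by combining the prepended initial segment $w_1^Z, z_1$ with the inductive conclusions about $\mathrm{FreeMerge}(X', Y')$. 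Property 1 follows since $\{z_1\} \cup (\{x_2,\ldots,x_m\} \cup \{y_1,\ldots,y_n\}) = \{x_1,\ldots,x_m\} \cup \{y_1,\ldots,y_n\}$. Property 2 follows because $z_1 = x_1$ occupies position $1$, which precedes every later occurrence of an $x_a$ (and imposes no constraint among the $y$'s), and the relative order among all other elements is inherited from $Z' = \mathrm{FreeMerge}(X',Y')$ by induction.

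The slightly more delicate bookkeeping is in properties 3 and 4, which track weight sums, so I expect that to be the main obstacle — not because it is deep, but because one must carefully align the index shifts between $X$, $X'$, $Z$, and $Z'$. For property 3 applied to $x_1$: in $Z$, $z_1 = x_1$ sits at position $1$ and the partial sum is $w_1^Z = w_1^X$, as required. For property 3 applied to $y_1$: in $Z'$, the first weight is $w_1^Y - w_1^X$ (the ``remainder''), and by induction the partial sum of $Z'$-weights up to the position of $y_1$ equals $w_1^Y - w_1^X$; prepending $w_1^Z = w_1^X$ and shifting positions by one recovers $w_1^X + (w_1^Y - w_1^X) = w_1^Y$. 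The $X'$-weights $w_2^X, w_3^X, \ldots$ are literally the $X$-weights $w_2^X, w_3^X, \ldots$ shifted down by one index, so property 4 for consecutive $x_i, x_{i+1}$ with $i \ge 1$ follows directly from property 4 for $Z'$ once we account for the single prepended element (the only subtlety being the case $i = 1$, where $z_a = x_1$ is position $1$ in $Z$, and the sum $\sum_{j=1}^{b} w_j^Z = w_1^X + \sum_{j=2}^{b} w_j^Z$; but $\sum_{j=2}^b w_j^Z$ is a partial sum in $Z'$ that by induction — property 3 applied to the first element $x_1$ of $X'$ — equals $w_2^X$... wait, one must instead use that $x_1$'s position in $X'$ is as the element after the remainder, so the relevant sum in $Z'$ up to $x_1$'s successor equals $w_2^X$, and there is no extra $w_1^X$ to absorb since $x_1$ of $X'$ starts the list; I would lay this out explicitly with the index alignment spelled out). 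The $y$-side of property 4, for consecutive $y_{i-1}, y_i$, is entirely inherited from $Z'$ since the $Y'$-weights $w_2^Y, w_3^Y, \ldots$ are unchanged from $Y$ and the first $Y'$-weight only affects property 3 for $y_1$, which we have already handled. Assembling these verifications completes the induction.
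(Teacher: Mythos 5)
Your proposal is correct and takes essentially the same route as the paper's proof: induction on the combined length $m+n$, base case when one list is empty, a symmetry reduction to the case $w_1^X \le w_1^Y$, and then verification of properties 1--4 by peeling off the prepended pair $w_1^Z, z_1$ and invoking the inductive hypothesis on $\mathrm{FreeMerge}(X',Y')$. The only difference is that you spell out the index-shift bookkeeping for properties 3 and 4 in more detail (including catching and correcting your own momentary confusion), whereas the paper is terser, disposing of property 4 with ``Property 4 now also follows.''
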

\begin{proof}
First, observe that this algorithm always terminates after no more than $m+n$ steps of the recursion. This is because we remove the first 2 elements of either $X$ or $Y$ to make $X^\prime$ and $Y^\prime$, so at most we can only do this $m$ times to $X$ and $n$ times to $Y$. 

$Z$ trivially satisfies properties 1-4 when $X$ or $Y$ is empty. To show this in general, we will induct on the combined length of the lists, $n+m$. The base case, $n+m=1$, is a special case of $X$ or $Y$ being empty. Assume $Z$ has properties 1-4 when $n+m \leq N$ for some $N$, and suppose $n+m=N+1$. Let $X^\prime$ and $Y^\prime$ be as described in algorithm \ref{free merge}. 

We need to check that the list $Z=w_1^Z,z_1||Z^\prime$, where $Z^\prime=FreeMerge(X^\prime,Y^\prime)$, has the properties above. Properties 1 and 2 follow from the definitions of $z_1$, $X^\prime$, and $Y^\prime$, and the assumption that these properties hold for $FreeMerge(X^\prime,Y^\prime)$. To show $Z$ has property 3, WLOG assume that $w_1^Z=w_1^X$ and $z_1=x_1$; otherwise, just switch the roles of $X$ and $Y$. Let $z_k=y_1$. Then using the inductive hypothesis it suffices to show that
\[
	w_1^Y=\sum_{j=1}^k w_j^Z
\]
Now, $\sum_{j=1}^k w_j^Z=w_1^X+\sum_{j=1}^k w_j^{Z^\prime}$. By definition $w_1^{Y^\prime}=w_1^Y-w_1^X$ and by the inductive hypothesis, $w_1^{Y^\prime}=\sum_{j=1}^k w_j^{Z^\prime}$. So $\sum_{j=1}^k w_j^Z = w_1^X+w_1^{Y^\prime}=w_1^Y$. Property 4 now also follows.
\end{proof}

Now we will treat merging lists of the form in eq. \ref{list} with $x_0=y_0$ and $x_m=y_n$ so that the result also satisfies a result like Proposition \ref{free merge works}.

\begin{algo}\label{constrained merge}
\emph{Constrained Merge.}\\
INPUT: Lists $X=x_0,w_1^X,x_1,\ldots,w_m^X,x_m$ and $Y=y_0,w_1^Y,y_1,\ldots,w_n^Y,y_n$ of the form in eq. \ref{list} with $x_0=y_0$ and $x_m=y_n$, and such that
\[
	\sum_{i=1}^m w_i^X = \sum_{j=i}^n w_i^Y
\]
OUTPUT: A list $Z=z_0,w_1^Z,z_1,\ldots,w_{m+n}^Z,z_{m+n}$.\\
PROCEDURE: Let $z_0=x_0$. Compute 
\[
	Z^\prime=FreeMerge(w_1^X,x_1,\ldots,w_m^X,x_m,w_1^Y,y_1,\ldots,w_n^Y,y_n)
\]
The last 2 terms of $Z^\prime$ will be $0,x_m$ or $0,y_n$. Let $Z^{\prime\prime}$ be $Z^\prime$ with the last 2 terms deleted. Return $Z=z_0||Z^{\prime\prime}$. $\Diamond$
\end{algo}

\begin{prop}\label{constrained merge works}
Algorithm \ref{constrained merge}, when applied to lists $X=x_0,w_1^X,x_1,\ldots,w_m^X,x_m$ and $Y=y_0,w_1^Y,y_1,\ldots,w_n^Y,y_n$ satisfying the assumptions in algorithm \ref{constrained merge} returns a list $Z=z_0,w_1^Z,z_1,\ldots,w_{m+n}^Z,z_{m+n}$ satisfying the following properties:
\begin{enumerate}
\item $\{z_1,\ldots,z_{m+n}\}=\{x_1,\ldots,x_n\} \cup \{y_1,\ldots,y_m\}$
\item If $z_i=x_a$ and $z_j=x_b$, then $a<b$ implies $i<j$; likewise if $z_i=y_a$ and $z_j=y_b$, then $a<b$ implies $i<j$.
\item For each $1 \leq i \leq m+n$, let $z_a=x_{i-1}$ and $z_b=x_{i}$. Then $w_i^X=\sum_{j=a}^b w_j^Z$, and likewise for $z_a=y_{i-1}$ and $z_b=y_{i}$
\end{enumerate}
\end{prop}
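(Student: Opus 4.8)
The plan is to reduce Proposition \ref{constrained merge works} to Proposition \ref{free merge works} via the construction in Algorithm \ref{constrained merge}. First I would observe that the hypothesis $\sum_{i=1}^m w_i^X = \sum_{j=1}^n w_j^Y$ is exactly what is needed to apply property 3 of Proposition \ref{free merge works} to the call
\[
	Z^\prime = FreeMerge(w_1^X,x_1,\ldots,w_m^X,x_m,\ w_1^Y,y_1,\ldots,w_n^Y,y_n).
\]
Indeed, applying property 3 with the first list equal to $w_1^X,x_1,\ldots,w_m^X,x_m$, the total weight appearing before $x_m$ in $Z^\prime$ equals $\sum_{i=1}^m w_i^X$; applying it with the second list, the total weight before $y_n$ equals $\sum_{j=1}^n w_j^Y$. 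By hypothesis these two quantities are equal, so $x_m$ and $y_n$ occur at positions in $Z^\prime$ with the same cumulative weight in front of them. Combined with property 2 (which forces $x_m$ to be the last among the $x_i$'s and $y_n$ to be the last among the $y_j$'s in $Z^\prime$), this shows $x_m$ and $y_n$ are the last two entries of $Z^\prime$, in some order, and the weight between them is $0$. This justifies the claim in the algorithm that the last two terms of $Z^\prime$ are $0,x_m$ or $0,y_n$, and that deleting them loses no information: the deleted element is really the common final vertex $x_m=y_n$, which we re-attach by hand as $z_{m+n}$ (via $z_0$ at the front together with the shift in indexing — I would double-check here that the paper's indexing convention puts the shared terminal vertex correctly; this is a bookkeeping point to verify carefully).

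Next I would deduce the three listed properties. Property 1 is immediate: $Z$ consists of $z_0=x_0=y_0$, the interior entries of $Z^\prime$ (which by property 1 of Proposition \ref{free merge works} are the $x_i$ for $1\le i\le m-1$ and the $y_j$ for $1\le j\le n-1$ together with their weights), and the re-attached terminal vertex $x_m=y_n$. Property 2 follows directly from property 2 of Proposition \ref{free merge works}, since the relative order of the $x$'s and of the $y$'s in $Z$ is inherited from $Z^\prime$ (the only entries we moved are the endpoints, which are extremal in their respective subsequences anyway). Property 3 is the weight-accounting statement and follows from property 4 of Proposition \ref{free merge works} applied to $Z^\prime$: for consecutive $x_{i-1},x_i$ with $1\le i\le m$, property 4 gives that $w_i^X$ equals the sum of the $Z^\prime$-weights strictly between them, and these weights are unchanged in passing to $Z$; the boundary cases $i=1$ (using $z_0=x_0$, and property 3 of Proposition \ref{free merge works} for the weight before $x_1$) and $i=m$ (using the re-attached terminal vertex) need to be checked against the indexing but are routine. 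The same argument applies verbatim with $y$ in place of $x$.

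The main obstacle I anticipate is not mathematical depth but index-juggling: Algorithm \ref{constrained merge} strips the leading weights/vertices off $X$ and $Y$ before calling $FreeMerge$, prepends $z_0$, and drops the trailing pair, so the position labels in $Z$ are shifted relative to those in $Z^\prime$, and one must be scrupulous that the "$\sum_{j=a}^b$" in property 3 picks up exactly the weights that property 4 of Proposition \ref{free merge works} guarantees — including correctly handling the first and last segments, where one endpoint is the hand-attached vertex rather than an entry produced by $FreeMerge$. I would handle this by fixing, at the outset, an explicit bijection between positions of $Z$ and positions of $Z^\prime\cup\{z_0, z_{m+n}\}$, and then quoting Proposition \ref{free merge works} through that bijection. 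A secondary point worth a sentence is nonnegativity: the intermediate weight $w_1^Y - w_1^X$ (or $w_1^X - w_1^Y$) produced inside $FreeMerge$ is nonnegative because $FreeMerge$ only subtracts the smaller of the two leading weights, so all weights in $Z^\prime$, and hence in $Z$, lie in $[0,\infty)$ as required for a list of the form in eq. \ref{list}.
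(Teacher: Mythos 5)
Your proof is correct and follows essentially the same route as the paper's: reduce to Proposition~\ref{free merge works}, observe that the equal-total-weight hypothesis forces the last two terms of $Z'$ to be $0,x_m$ or $0,y_n$, and then read off properties 1--3 from the corresponding properties of $FreeMerge$. One small citation slip: property 3 of Proposition~\ref{free merge works} gives the cumulative weight before $x_1$ (resp.\ $y_1$), not before $x_m$; to get the cumulative weight before $x_m$ you must chain property 3 with property 4 over the intervals $[x_{i-1},x_i]$ (or argue directly that $FreeMerge$ conserves and consumes weight from the front, so equal totals force simultaneous exhaustion) — the conclusion is the same and the paper simply asserts it without attribution.
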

\begin{proof}
Property 2 is automatic given prop \ref{free merge works}. Property 1 follows from the fact that the homonymous arcs $z_1^\prime,\ldots,z_k^\prime$ in 
\[
	Z^\prime=FreeMerge(w_1^X,x_1,\ldots,w_m^X,x_m,w_1^Y,y_1,\ldots,w_n^Y,y_n)
\]
will be the disjoint union of the arcs in the arguments of $FreeMerge$. Since the total weight in both lists is equal, the last 2 terms of $Z^\prime$ will be either $0,x_m$ or $0,y_m$. Therefore, after discarding the spurious last 2 terms of $Z^\prime$, property 1 holds, and property 3 also follows from that fact and prop \ref{free merge works}.
\end{proof}

We now generalize to all lists of the form in eq. \ref{list}. Now, we allow any number of arcs in the two lists to coincide.

\begin{algo}\label{general merge}
\emph{General Merge.}\\
INPUT: Lists $X=x_0,w_1^X,x_1,\ldots,x_m$ and $Y=y_0,w_1^Y,y_1,\ldots,y_n$ of the form in eq. \ref{list}.\\
OUTPUT: A list $Z=z_0,w_1^Z,z_1,\ldots,z_{k}$ of the form in eq. \ref{list}.\\
PROCEDURE: Compute the set of arcs $A$ which appear in both $X$ and $Y$. Define index functions $\sigma,\eta$ by, for each arc $a \in A$, $x_{\sigma(a)}=a$ and $y_{\eta(a)}=a$. For each pair $a,b$ of arcs in $A$, with $\sigma(a)<\sigma(b)$, first check that $\eta(a)<\eta(b)$. Otherwise, raise an exception. Next, check that 
\[
	\sum_{i=\sigma(a)+1}^{\sigma(b)} w_i^X = \sum_{j=\eta(a)+1}^{\eta(b)} w_j^Y
\]
If that equation is not satisfied, raise an exception. 

Now, let $A=\{a_1,\ldots,a_k\}$. Partition $X$ into sublists 
\[
	X_0=x_0,w_1^X,\ldots,x_{\sigma(a_1)-1},w_{\sigma(a_1)},
\] 
\[
	X_1=x_{\sigma(a_1)},\ldots,x_{\sigma(a_2)}
\]
\[\ldots\]
\[
	X_k=w_{\sigma(a_k)+1}^X,x_{\sigma(a_k)+1},\ldots,x_m
\] 
Partition $Y$ into sublists $Y_0,\ldots,Y_k$ in the same way. Define $Z^\prime$ to be
\[
	(FreeMerge(X_0^*,Y_0^*))^*||ConstrainedMerge(X_1,Y_1)||\ldots||FreeMerge(X_k,Y_k)
\]
where ${}^*$ denotes reindexing in opposite order. Concatenating $ConstainedMerge(X_i,Y_i)$ with $ConstainedMerge(X_{i+1},Y_{i+1})$ results in duplicate, adjacent $a_i$. Remove all these duplicates from $Z^\prime$ and call the resulting list $Z$. Return $Z$. $\Diamond$
\end{algo}

Now, as an immediate corollary of propositions \ref{free merge works} and \ref{constrained merge works}, we have

\begin{prop}\label{general merge works}
If algorithm \ref{general merge} does not raise an exception when applied to lists $X=x_0,w_1^X,x_1,\ldots,w_m^X,x_m$ and $Y=y_0,w_1^Y,y_1,\ldots,w_n^Y,y_n$ satisfying the assumptions in algorithm \ref{general merge}, then it returns a list $Z=z_0,w_1^Z,z_1,\ldots,w_{m+n}^Z,z_{m+n}$ satisfying the following properties:
\begin{enumerate}
\item $\{z_1,\ldots,z_{m+n}\}=\{x_1,\ldots,x_n\} \cup \{y_1,\ldots,y_m\}$
\item If $z_i=x_a$ and $z_j=x_b$, then $a<b$ implies $i<j$; likewise if $z_i=y_a$ and $z_j=y_b$, then $a<b$ implies $i<j$.
\item For each $1 \leq i \leq m+n$, let $z_a=x_{i-1}$ and $z_b=x_{i}$. Then $w_i^X=\sum_{j=a}^b w_j^Z$, and likewise for $z_a=y_{i-1}$ and $z_b=y_{i}$
\end{enumerate}
\end{prop}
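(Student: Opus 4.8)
The plan is to prove Proposition \ref{general merge works} by reducing it piecewise to the two merge subroutines whose correctness is already established, namely Propositions \ref{free merge works} and \ref{constrained merge works}, and then tracking how the partition-and-reassemble step of General Merge glues the pieces together. First I would fix notation for the partition produced by the algorithm: the common arcs $A=\{a_1,\ldots,a_k\}$ cut $X$ into the blocks $X_0,\ldots,X_k$ and $Y$ into $Y_0,\ldots,Y_k$, where $X_i$ and $Y_i$ share the same first arc $a_i$ and last arc $a_{i+1}$ for $1\le i\le k-1$, while $X_0,Y_0$ share only a last arc ($a_1$), and $X_k,Y_k$ share only a first arc ($a_k$). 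The crucial preliminary observation is that the exception checks performed by the algorithm are exactly the hypotheses needed to invoke the earlier propositions block by block: the check $\eta(a)<\eta(b)$ whenever $\sigma(a)<\sigma(b)$ guarantees the common arcs appear in the same relative order in both lists, so the partition into $X_i$ and $Y_i$ is well-defined and consistent; and the weight-sum check $\sum_{\sigma(a)+1}^{\sigma(b)}w_i^X=\sum_{\eta(a)+1}^{\eta(b)}w_j^Y$ over consecutive common arcs is precisely the balancing hypothesis of Constrained Merge applied to $(X_i,Y_i)$ for each interior block $1\le i\le k-1$.

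Next I would verify correctness block by block. For the interior blocks $X_i,Y_i$ with $1\le i\le k-1$, Proposition \ref{constrained merge works} applies directly and yields a merged list $ConstrainedMerge(X_i,Y_i)$ satisfying the interleaving (property 2) and the weight-refinement (property 3) properties, starting at $a_i$ and ending at $a_{i+1}$. For the two extremal blocks I would handle $X_k,Y_k$ with Proposition \ref{free merge works} applied to the truncated lists (they share only the initial arc $a_k$, so after stripping that shared head the remaining tails have disjoint arc sets), and symmetrically handle $X_0,Y_0$ by reversing both lists so that the shared arc $a_1$ becomes the head, applying Free Merge, and reversing back — this is exactly what the ${}^*$ reindexing in the algorithm does, and I would note that all four properties are preserved under this order-reversal since they are symmetric under simultaneous reversal of a list and its decorations. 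In each case I would record: which arcs appear (giving property 1 blockwise), that the within-block arc order is preserved, and that each original weight $w_i^X$ between consecutive $X$-arcs in that block is the sum of the $Z$-weights spanning the corresponding arcs.

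Then comes the assembly step. Concatenating the blockwise outputs produces duplicated adjacent copies of each $a_i$ (the last arc of block $i-1$'s output equals the first arc of block $i$'s output); the algorithm deletes these, and I would argue the deletion collapses the pair of weights flanking a duplicated $a_i$ into nothing problematic because, in the sorted list structure of eq. \ref{list}, the weight ``between $a_i$ and $a_i$'' contributed by the gluing is zero (in Constrained Merge the stripped trailing $0$ already ensures the block ends cleanly at $a_i$ with no spurious trailing weight, and similarly at the head). With that, property 1 for $Z$ follows from taking the union of the blockwise conclusions together with $\{a_1,\ldots,a_k\}$; property 2 follows because within a single block it is the blockwise statement, and across blocks the common arcs $a_i$ act as monotone separators forcing the global order; property 3 splits into two cases — if the two consecutive $X$-arcs $x_{i-1},x_i$ lie in the same block it is immediate from that block's Proposition, and if $x_{i-1}=a_j$ and $x_i$ is the next $X$-arc in block $j$ (or the boundary case where $x_{i-1}$ and $x_i$ straddle a common arc) the sum telescopes across the blockwise weight-refinement identities, using that the gluing weight at each $a_i$ is zero.

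\textbf{Main obstacle.} I expect the bookkeeping around the extremal blocks and the duplicate-deletion to be the delicate point rather than any deep idea: one has to be careful that the reversal trick for $X_0,Y_0$ interacts correctly with the indexing convention of eq. \ref{list} (weights sit \emph{before} their arc, so reversing shifts the weight-to-arc association by one), and that after deleting duplicated $a_i$'s the resulting list still has exactly the form $z_0,w_1^Z,z_1,\ldots$ with the right total arc count $m+n-k$ rather than $m+n$ — here I would note the proposition's stated index range $z_{m+n}$ is really shorthand and should be read as ``the combined list,'' or I would reconcile it by counting the $k$ identifications. Verifying property 3 at the block seams, where a single original weight may be refined by $Z$-weights drawn from the tail of one block's output and nothing from the next (because the seam weight is $0$), is the one spot where I would write the telescoping sum out carefully rather than appeal to symmetry.
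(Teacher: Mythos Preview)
Your approach is correct and is exactly the argument the paper has in mind: the paper simply declares the proposition an ``immediate corollary of propositions \ref{free merge works} and \ref{constrained merge works}'' with no further details, and your block-by-block reduction to Free Merge on the extremal pieces and Constrained Merge on the interior pieces is precisely how that corollary is cashed out. You are, if anything, more careful than the paper---your observation about the index count ($m+n$ versus $m+n-k$ after identifying the $k$ common arcs) is a genuine bookkeeping slip in the statement that the paper does not address.
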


We can now define the algorithm for realizing a maximal, maximally heteronymous arc diagram. 

\begin{algo}\label{heteronymous realization}
\emph{Heteronymous Realization.}\\
INPUT: A maximal, maximally heteronymous weighted arc diagram $D$ on a substrate $\Sigma$. \\
OUTPUT: A lifting picture $L=(\Sigma,C,\prec,\B,B)$. \\
PROCEDURE: Let $C$ be the set of homonymous arcs in $D$. Check that each component of the complement of $C$ is simply connected; if not, raise an exception. Check that $\bdry C$ contains 2 edges of $\bdry \Sigma$; otherwise, raise an exception. 

For each component $S$ of the complement of $C$, compute the list $X(S) = x_0,w_1,x_1,\ldots,w_n,x_n$, where $x_0,\ldots,x_n$ are the arcs of $C$ bounding $S$, and $w_i$ is the weight of the heteronymous arc between $x_{i-1}$ and $x_i$. Choose a component $T_0$ of $\Sigma \backslash C$. Let $Z_0=X(T_0)$. Choose a component $U$ adjacent to $T_0$. Let $T_1 = T_0 \cup U$. Compute $Z_1 = GeneralMerge(Z_0,X(U))$. If $T_1 = \Sigma$, then we are done. Otherwise, choose a component $U$ to $T_1$. Let $T_1 = T_0 \cup U$, and compute $Z_1 = GeneralMerge(Z_1,X(U))$. Continue in this fashion until, for some $k$, $T_k = \Sigma$. 

The order in which each arc of $C$ appears in $Z_k$ defines the order $\prec$ on $C$. Construct $B$ as follows. $Z_k = z_0,w_1,z_1,\ldots,w_n,z_n$. For $0 \leq i \leq n$, enclose $m_i \in M_\B$ by a homonymous arc with endpoints the same color as those of $z_i$. Assign it half the weight of $z_i$. Place a heteronymous arc between $z_i$ and $z_{i+1}$, and assign its weight to be $w_i$. Return $(\Sigma,C,\prec,\B,B)$. $\Diamond$
\end{algo}

\begin{theorem}\label{heteronymous realization works}
Let $D$ be a maximal, maximally heteronymous weighted arc diagram on a substrate $\Sigma$. Then algorithm \ref{heteronymous realization} returns a lifting picture if and only if $D$ is realizable, and the lifting picture it returns realizes $D$.
\end{theorem}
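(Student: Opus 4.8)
The plan is to prove the two implications separately. Write $C$ for the set of homonymous arcs of $D$, the unique branching diagram available for a maximally heteronymous diagram. I will establish \emph{soundness} --- if algorithm~\ref{heteronymous realization} returns a lifting picture $L=(\Sigma,C,\prec,\B,B)$ then $L$ realizes $D$ --- and \emph{completeness} --- if $D$ is realizable then the algorithm runs to completion without raising an exception. The ``only if'' half of the biconditional is then immediate from soundness, since a lifting picture realizing $D$ witnesses realizability.

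For soundness, first confirm that $(\Sigma,C,\prec)$ is valid branching data, so that Proposition~\ref{branch data} produces a branched cover $f:\Sigma\to\B$. Maximal heteronymy gives $|C|=|V_\Sigma|/2-\chi(\Sigma)$, and the two checks performed by the algorithm give that the sheets are simply connected with two boundary edges of $\Sigma$ each; what remains is that $\prec$ agrees with the canonical order at every vertex. I would deduce this from property~2 of Proposition~\ref{general merge works}: applied inductively along the iterated merges building $Z_k$ it shows $\prec$ extends the order of every list $X(S)$, and a short argument shows the canonical order at a vertex $v$ lies in the transitive closure of those orders --- two arcs of $C$ consecutive at $v$ co-bound the sheet $S$ that near $v$ fills the wedge between them (which meets no arc of $C$), and $X(S)$ lists them in the order they have at $v$. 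It then remains to check that $B$ lifts to $D$ with the correct weights. The arc of $B$ enclosing $m_i$, which the algorithm weights $w_D(z_i)/2$, lifts by the corollary to Proposition~\ref{how lifting works} to the branching arc $z_i$ on two sheets and trivially elsewhere, so $z_i$ inherits weight $w_D(z_i)$; no heteronymous arc of $B$ contributes, since its nontrivial lifts are heteronymous. For a heteronymous arc of $D$ lying in a sheet $S$ between consecutive branching arcs $x,x'$ of $X(S)$, Proposition~\ref{how lifting works} identifies the heteronymous arcs of $B$ that lift to it as exactly those whose enclosed singular value lies between the positions of $x$ and $x'$ in $Z_k$; by property~3 of Proposition~\ref{general merge works}, propagated through the iterated merges using additivity of ``total weight between two entries of a list'', their weights sum to the weight recorded between $x$ and $x'$ in $X(S)$, which is $w_D$ of that arc. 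Hence $\wt{B}=D$ with matching weights and $L$ realizes $D$.

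For completeness, suppose $D$ is realized by $R=(\Sigma,C_R,\prec_R,\B,B_R)$. Then $C_R=C$ since the branching diagram is forced, so the algorithm's two checks pass; we may take $B_R$ in the standard form of Figure~\ref{het bigon}; the proposition asserting that a realizing order extends $<_c$ on each sheet gives that $\prec_R$ does so; and realizability gives that on each sheet $S$ the $D$-weight between consecutive arcs of $X(S)$ equals the total $B_R$-weight between the corresponding singular values. I would encode $\prec_R$ and $B_R$ as a single master list $Z^{*}$ --- all branching arcs in $\prec_R$-order with the $B_R$ heteronymous weights interleaved --- so that each $X(S)$ is the restriction of $Z^{*}$ to the branching arcs of $\partial S$ (same relative order; weight between consecutive survivors equal to the sum of the intervening weights of $Z^{*}$). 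The core of the argument is then a lemma: \textsc{GeneralMerge}, applied to two restrictions of a common master list, never raises an exception and returns a restriction of a master list differing from $Z^{*}$ only across weight-zero heteronymous arcs. Granting this, induction on the merge steps shows every accumulated list $Z_i$ is such a restriction, so no step raises an exception; the sheets are exhausted one at a time, so the algorithm returns a lifting picture, which by soundness realizes $D$.

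I expect the main obstacle to be precisely that lemma. \textsc{FreeMerge} interleaves its two inputs by accumulated weight and breaks ties --- entries separated by zero weight --- in favor of one fixed input, so an intermediate merged list can disagree with $\prec_R$ exactly on tie-separated arcs; if a sheet merged later contains both such arcs in the opposite relative order, the order check inside \textsc{GeneralMerge} would fail. The proof must show such a disagreement is always benign: that it only ever replaces $\prec_R$ by another valid order agreeing with it away from weight-zero arcs, so that the family of accumulated lists stays simultaneously certifiable by \emph{some} master list. (If the incorporation order of sheets turned out to matter, one would instead have to argue that a good order always exists.) A secondary, purely bookkeeping difficulty is tracking the weight-index offsets through the three nested merge procedures so that property~3 of Proposition~\ref{general merge works} chains without slippage.
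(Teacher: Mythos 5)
Your soundness argument tracks the paper's proof closely: Proposition~\ref{how lifting works} gives the unweighted lift and the homonymous weights via its corollary, and property~3 of Proposition~\ref{general merge works} gives the heteronymous weights; your extra step verifying that $\prec$ agrees with the canonical order at each vertex (two arcs consecutive at $v$ co-bound a sheet, so $X(S)$ enforces their order) is more explicit than the paper, which leaves this to the algorithm's checks, and is a welcome addition. The completeness half, however, diverges from the paper. Where you posit a ``master list'' $Z^*$ and a lemma that \textsc{GeneralMerge} applied to two restrictions of $Z^*$ never raises an exception, the paper instead invokes Lemma~\ref{merge order doesn't matter} (commutativity of iterated \textsc{GeneralMerge}) together with Proposition~\ref{how lifting works}: the weight-sum condition is satisfied for any pair of sheets by \ref{how lifting works}, and \ref{merge order doesn't matter} is meant to propagate this through the accumulated merges independently of the order in which sheets are absorbed. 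You should be aware of that lemma; it is the intended replacement for your ``core lemma.''

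The tie-breaking concern you flag is genuine and worth dwelling on, but note it is not actually resolved by the paper either. Lemma~\ref{merge order doesn't matter} asserts $\M(\M(A,B),C)=\M(\M(A,C),B)$ with a very compressed proof that already presupposes none of the nested merges raise exceptions, and it says nothing about the order-consistency check when \textsc{FreeMerge} breaks a zero-weight tie differently from $Z^*$. So your diagnosis that ``the disagreement is always benign'' (i.e.\ the accumulated list remains a restriction of \emph{some} master list obtained from $Z^*$ by transposing across weight-zero heteronymous arcs, which still realizes $D$ up to the paper's equivalence of diagrams) is precisely the argument that needs to be supplied, and neither your sketch nor the paper supplies it in full. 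If you pursue your master-list route, I would make the ``master list up to zero-weight transpositions'' invariant explicit and show it is preserved by one application of \textsc{GeneralMerge}; alternatively you could prove the paper's Lemma~\ref{merge order doesn't matter} carefully enough that it guarantees exception-freeness, not merely equality of outputs when both sides are defined.
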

\begin{proof}
Suppose that Algorithm \ref{heteronymous realization} returns $L = (\Sigma,C,\prec,\B,B)$ when applied to $D$. It follows from Proposition \ref{how lifting works} that $L$ realizes $D$ as an unweighted diagram; it remains to show that each arc of $D$ gets the correct weight. It follows from Proposition \ref{how lifting works} that the homonymous arcs all get the correct weight. Let $h \in D$ be a heteronymous arc. It is contained in some sheet $S$. Let $c_k,c_\ell$ be the pair of arcs which flank $h$ in $S$, where the indices indicate their index in $C$ under $\prec$.  Again by Proposition \ref{how lifting works}, the heteronymous arcs in $B$ between critical values $m_k$ and $m_\ell$ will lift to $h$. The weights of those arcs are $w_{k+1},\ldots,w_\ell$. By Proposition \ref{general merge works}, $\sum_{i=k+1}^\ell w_i$ is equal to the weight of $h$, so $L$ realizes $D$.

Now, suppose $D$ is realizable. We want to show that Algorithm \ref{heteronymous realization} finishes without throwing an exception. Since $D$ is realizable, there exists a branched cover $f:\Sigma \to \B$ and a weighted arc diagram $B$ on $\B$, so that $B$ lifts to $D$ under $f$. There is a unique up to homotopy set of branch cuts disjoint from and parallel to the homonymous arcs of $B$ which connect the critical values $M_\B$ to the vertices of $\B$. Therefore, we can assume that the branch cuts lift to the homonymous arcs $C$ of $D$. That implies that there are no cycles in $C$; that each component of $D \backslash C$ is simply connected, as these are sheets of a branched cover of a disk; and that the canonical order of homonymous arcs bounding each sheet extends to a total order on $C$. Finally, we need to show that when we merge a sheet with a connected union of sheets, the condition on sums of weights in Algorithm \ref{general merge} is satisfied. This follows from Proposition \ref{how lifting works} and Lemma \ref{merge order doesn't matter}.
\end{proof}

\begin{lemma}\label{merge order doesn't matter}
Let $A,B,C$ be lists of the form required by Algorithm \ref{general merge}. Then $\M(\M(A,B),C) = \M(\M(A,C),B)$.
\end{lemma}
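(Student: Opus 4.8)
The plan is to prove associativity and commutativity of the merge operation simultaneously by giving a coordinate-free characterization of $\M(X,Y)$: the merged list is determined, up to the equivalence of lists that the algorithm respects, by its underlying ordered set of homonymous arcs together with the ``cumulative weight function'' it induces. Concretely, to each list $X = x_0, w_1^X, x_1, \ldots, w_m^X, x_m$ associate the finite partially ordered set $P(X) = \{x_0 < x_1 < \cdots < x_m\}$ and the function $\mu_X$ on consecutive pairs given by $\mu_X(x_{i-1}, x_i) = w_i^X$, extended additively to all comparable pairs by $\mu_X(x_i, x_j) = \sum_{\ell = i+1}^{j} w_\ell^X$. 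Properties (1)--(3) of Proposition \ref{general merge works} say exactly that $P(\M(X,Y))$ is the poset obtained by merging the chains $P(X)$ and $P(Y)$ along their common elements (this is well-defined precisely when the compatibility conditions checked by Algorithm \ref{general merge} hold), and that $\mu_{\M(X,Y)}$ restricts to $\mu_X$ on $P(X)$ and to $\mu_Y$ on $P(Y)$ in the additive sense. The first step is therefore to show that these two pieces of data \emph{uniquely determine} a list up to the relevant equivalence — i.e. that if two output lists $Z, Z'$ both have the common refinement poset as their arc order and both restrict additively to $\mu_X$ and $\mu_Y$, then $Z = Z'$. This is where the hypothesis that all weights are nonnegative is used: between two consecutive arcs of $P(X) \cup P(Y)$ (in the merged order), the intervening weight is pinned down as a difference of two additive sums, one coming from $X$ and one from $Y$, and there is no further subdivision, so the list is rigid.

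With that uniqueness lemma in hand, the second step is to check that $P(\M(\M(A,B),C))$ and $P(\M(\M(A,C),B))$ are the same poset and that both induce the same additive weight function. For the poset: merging three chains along common elements is manifestly associative and commutative as an operation on posets — the result is the unique chain (when it exists) on $P(A) \cup P(B) \cup P(C)$ that restricts to each $P(A), P(B), P(C)$ — so both iterated merges produce this common chain. Here I would note that the exception-raising in Algorithm \ref{general merge} exactly detects failure of this chain to exist (an order inversion) or failure of the weight data to be consistent (a mismatch of interval sums); since the hypothesis of the lemma is that $\M(\M(A,B),C)$ is defined, I need to argue that $\M(\M(A,C),B)$ is then also defined. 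This follows because the consistency conditions are symmetric: the pairwise order-compatibility among $A, B, C$ and the pairwise interval-sum equalities among them are exactly the conditions that must hold for \emph{either} bracketing to succeed, and both are already forced by success of the first bracketing together with the symmetric role of the three inputs. For the weight function: on each of $P(A), P(B), P(C)$, both iterated merges restrict additively to $\mu_A, \mu_B, \mu_C$ respectively, by two applications of Proposition \ref{general merge works}; and since $P(A) \cup P(B) \cup P(C)$ is covered by these three chains, the additive weight function on the whole merged chain is determined.

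The main obstacle I anticipate is the bookkeeping around \emph{when the merges are defined}: Lemma \ref{merge order doesn't matter} as stated presupposes $\M(\M(A,B),C)$ exists, and I must be careful that the argument genuinely derives definedness of the other bracketing rather than silently assuming it. The cleanest way to handle this is to isolate, before the two main steps, a symmetric ``global compatibility'' predicate on an unordered triple $(A,B,C)$ — namely that the three chains admit a common chain refinement and that all pairwise interval-sum equalities hold — and prove (a) this predicate is equivalent to definedness of $\M(\M(A,B),C)$, by unwinding the exception checks of Algorithm \ref{general merge} applied twice, using that $\M(A,B)$ restricts additively to both $\mu_A$ and $\mu_B$; and (b) the predicate is visibly invariant under permuting $A, B, C$. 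Then Lemma \ref{merge order doesn't matter} reduces to steps one and two above. A secondary, purely technical nuisance is the reindexing-in-opposite-order (${}^*$) and the duplicate-removal steps inside Algorithm \ref{general merge}, which mean ``$=$'' in the lemma should be read up to the list equivalence that collapses zero-weight padding and orientation of index ranges; I would state this normalization once at the start of the proof so the final identity is literally an equality of normalized lists.
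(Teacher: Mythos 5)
Your approach is genuinely different from the paper's. The paper gives a short, operational argument: starting from $Y = \M(\M(A,C),B)$, delete the arcs of $C$ not appearing in $A$ or $B$ and sum adjacent weights to get a list $Z$; then $Z = \M(A,B)$, and re-merging gives $\M(Z,C) = Y$, hence $X = Y$. You instead attempt a characterization of the output of $\M$ as (poset of arcs, additive weight function) and argue associativity of each piece separately. The uniqueness lemma you propose (given the order and additive restrictions to $\mu_X, \mu_Y$, the intervening weights are pinned down) is sound, and the idea of isolating a symmetric compatibility predicate to handle definedness is a good instinct that the paper's own proof arguably glosses over.

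However, there is a genuine gap in the poset step. You assert that ``the result is the unique chain (when it exists) on $P(A) \cup P(B) \cup P(C)$ that restricts to each $P(A), P(B), P(C)$,'' and cite this uniqueness as the reason both bracketings produce the same underlying order. That uniqueness is false: three chains with prescribed common elements generically admit many linear extensions (e.g.\ if $A$ and $B$ share no arcs, the relative order of an $A$-only arc and a $B$-only arc is not determined by the poset data alone). Proposition~\ref{general merge works} item~2 only says the output is \emph{a} linear extension, not \emph{the} one. What actually selects the chain is the cumulative-position data: $FreeMerge$ sorts arcs by cumulative weight anchored at the common arcs, so the order is intertwined with the weight function, not independent of it. The real content of the lemma is precisely the claim that the cumulative positions produced by $\M(\M(A,B),C)$ --- anchored first between $A$ and $B$, then between $\M(A,B)$ and $C$ --- coincide with those produced by $\M(\M(A,C),B)$, and this requires an explicit consistency-of-alignment argument which your proposal skips by declaring the poset merge ``manifestly associative.'' The same issue infects the definedness step: the exception checks in the second merge are not purely pairwise among $A, B, C$; they compare, e.g., arcs of $B$ and $C$ against positions assigned in the intermediate list $\M(A,B)$, and showing the checks are symmetric again reduces to the cumulative-position consistency you have not established. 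Your framework can be repaired by replacing ``unique chain on the poset'' with ``chain determined by the cumulative position function,'' and then verifying that the position function is independent of bracketing, but as written the key step is asserted rather than proved.
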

\begin{proof}
Let $X = \M(\M(A,B),C)$ and $Y = \M(\M(A,C),B)$, where $\M$ performs algo \ref{general merge}. Delete the arcs of $C$ not appearing in $A$ or $B$ from $Y$ and add any weights which are now adjacent; call the result $Z$. On one hand, $Z = \M(A,B)$. On the other hand, clearly $\M(Z,C)=Y$, so $X=Y$.
\end{proof}

\section{General Maximal Diagrams}\label{general maximal section}
We now put the results of the previous sections together to decide whether any maximal arc diagram is realizable. A generic maximal diagram is composed of connected, maximal, fully homonymous subdiagrams separated by heteronymous arcs. On a bigon, a generic diagram has the form shown in Figure \ref{general bigon}. 

\begin{define}
A \emph{homonymous clump} is a maximal, connected, homonymous subdiagram.
\end{define}

Let $H_1,\ldots,H_n$ be the set of homonymous clumps in $D$. Each $H_i$ has a boundary consisting of homonymous arcs which are side of a triangle with a heteronymous arc. Call these arcs \emph{outer to} $H_i$. We may cut $\Sigma$ along each of these arcs and paste in triangles as shown in Figure \ref{cutting parallel to an arc}. Let $\Sigma_i$ be the component of the resulting surface which contains $H_i$. 

\begin{figure}[ht]
    \centering
    \includegraphics[width=0.5\textwidth]{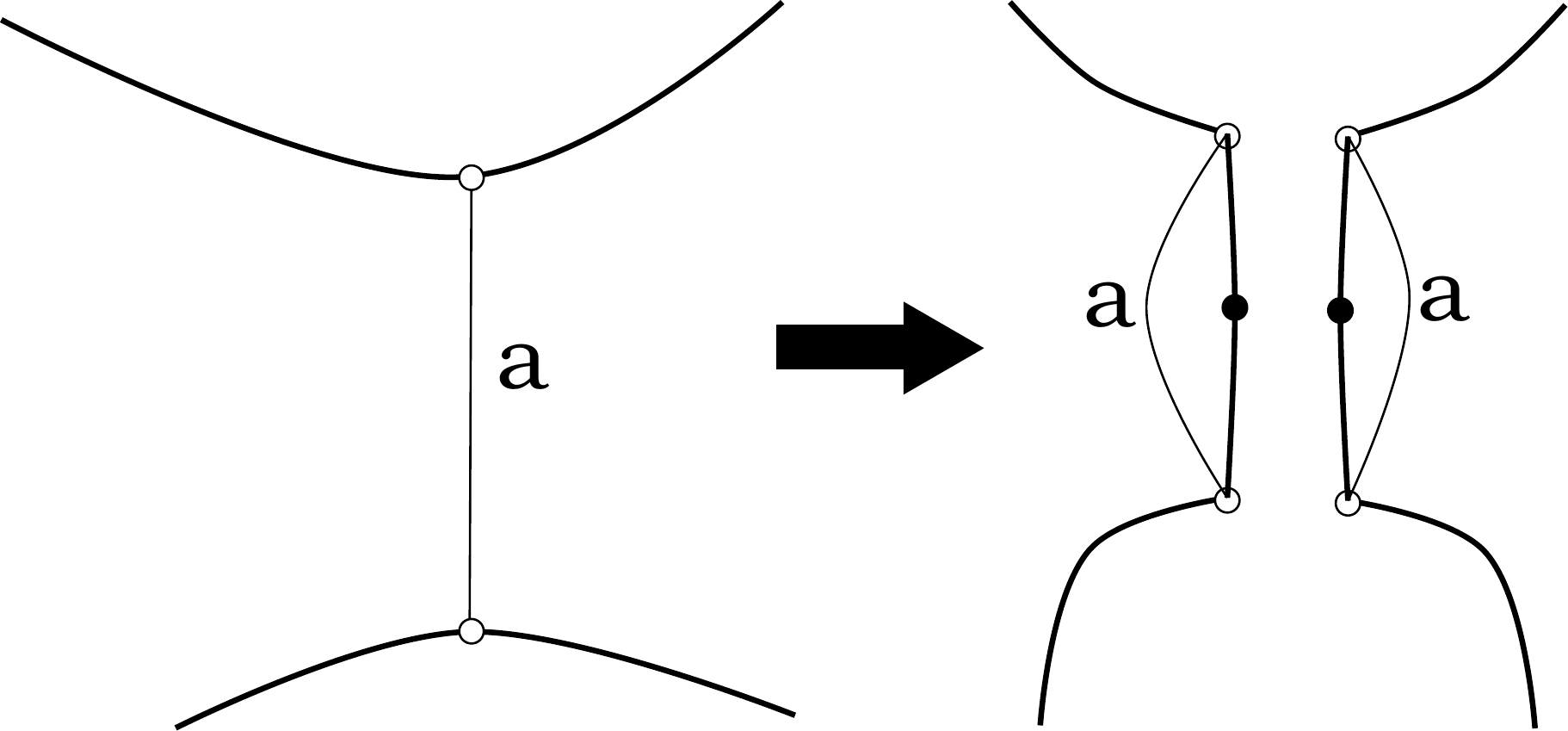}
    \caption{Cutting along the homonymous arc $a$.}
    \label{cutting parallel to an arc}
\end{figure}

\begin{figure}[ht]
    \centering
    \includegraphics[width=0.5\textwidth]{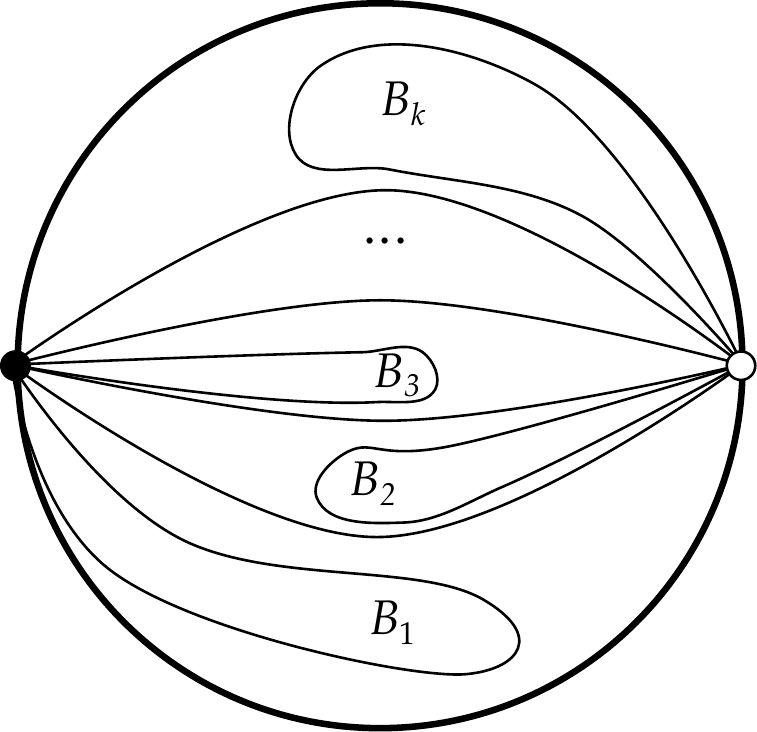}
    \caption{Each $B_i$ is a maximal, homonymous subdiagram.}
    \label{general bigon}
\end{figure}

Each component of the complement of $\Sigma_1 \cup \ldots \cup \Sigma_n$ is either an outer triangle bounded by 2 edges of $\bdry \Sigma$ and a homonymous arc, or a region of the form shown in Figure \ref{typical het sheet} which we will call a heteronymous ladder.

\begin{define}
Let $D$ be an arc diagram. A \emph{heteronymous ladder} is a region of $\Sigma$ bounded by homonymous arcs of $D$ which contains only heteronymous arcs in its interior.
\end{define}

This all suggests that to realize a diagram $D$, one can perform homonymous recursion to realize each homonymous clump individually, and then use a version of Algorithm \ref{heteronymous realization} to order these $H_i$ and assign weights to the the heteronymous arcs separating the diagrams which realize each $H_i$ downstairs. 

\begin{define}
Let $(\Sigma,C,\prec,\B,B)$ be a lifting picture realizing a diagram $D$ on $\Sigma$, and $E$ be a maximal, connected homonymous subdiagram of $B$. We say that $E$ has \emph{connected branching} if the set of branching arcs whose critical points are enclosed by arcs of $E$ all are contained in a single connected, homonymous subdiagram of $D$.
\end{define}

A maximal diagram consists of honomyous clumps and heteronymous ladders. Each heteronymous ladder can be given a canonical total order, just as in the case of the sheets of a maximally heteronymous diagram. We now show that, in essence, clumps lift to clumps. For this we'll need a procedure we call \emph{weight rebalancing}.

\begin{define}
Let $D$ be an arc diagram, $(\Sigma,C,\prec,\B,B)$ be a lifting picture realizing $D$, and $h \in B$ a homonymous arc. The \emph{branching} of $h$ is the set of arcs in $C$ whose critical points are mapped into the region enclosed by $h$. We say $h$ has \emph{connected branching} if the branching of $h$ is contained in one homonymous clump of $D$. Otherwise, $h$ has \emph{disconnected} branching. 
\end{define}

\begin{define}
Let $D$ be an arc diagram, $(\Sigma,C,\prec,\B,B)$ be a lifting picture realizing $D$, and $h \in B$ a homonymous arc. A \emph{new lift} of $h$ is an arc in $D$ which is not a lift of any arc in $B$ enclosed by $h$.
\end{define}

If a homonymous arc has no new lifts, then we may perform the following opertaion on $B$ without changing its lift. 

\begin{define}\label{pushing down weight}
\emph{Pushing down weight.}
Let $(\Sigma,C,\prec,\B,B)$ be a lifting picture, $w_B$ be the weight function on $B$, and $h \in B$ a homonymous arc. Let $h_1,\ldots,h_k$ be the outermost arcs enclosed by $h$; that is, each $h_i$ is enclosed by $h$ and not by any other arc enclosed by $h$. Define $P_h(B)$ to be $B$ with $h$ deleted and with $w_{P_h}(h_i) = w_B(h_i)+w_B(h)$. We say that we have \emph{pushed down} the weight of $h$ to make $(\Sigma,C,\prec,\B,P_h(B))$.
\end{define}

\begin{lemma}\label{pushing down weights works}
Let $D$ be a weighted arc diagram, and suppose that $(\Sigma,C,\prec,\B,B)$ realizes $D$. If $h$ is a homonymous arc in $B$ with no new lifts, then the diagram $(\Sigma,C,\prec,\B,P_h(B))$ also realizes $D$.
\end{lemma}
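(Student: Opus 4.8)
The branched cover $f:\Sigma\to\B$ determined by $(C,\prec)$ (Proposition \ref{branch data}) is unaffected when we replace $B$ by $P_h(B)$, so the claim amounts to: $P_h(B)$ lifts to $D$ under this same $f$. Recall that the weight of a lifted arc is $w_{\wt B}(\wt b)=\sum_{\{b\,:\,\wt b\in\lift(b)\}}w_B(b)$. Passing from $B$ to $P_h(B)$ deletes $h$ and raises the weight of each $h_i$ by $w_B(h)$, changing nothing else; so on each arc $\wt b$ of $\Sigma$ the weight $w_{\wt B}(\wt b)$ is increased by $w_B(h)\cdot\#\{i:\wt b\in\lift(h_i)\}$ and decreased by $w_B(h)$ exactly when $\wt b\in\lift(h)$. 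Hence the lemma reduces to the set identity
\[
	\lift(h)=\lift(h_1)\sqcup\cdots\sqcup\lift(h_k)
\]
--- the nontrivial lifts of $h$ are precisely the nontrivial lifts of its outermost enclosed arcs, occurring without overlap --- since then every such net change is zero. (If $w_B(h)=0$, or $h$ is trivial, there is nothing to prove.)

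First I would record the geometry on the bigon. A homonymous arc on $\B$ is a loop based at a vertex $w$, and the disk $R_h$ it encloses meets $\bdry\B$ only at $w$; hence every arc of $B$ enclosed by $h$ is again a homonymous loop at $w$, the $h_i$ are such loops with pairwise disjoint enclosed disks $R_{h_i}\subseteq R_h$, and every arc of $B$ enclosed by $h$ lies in (or equals) exactly one $h_i$. Correspondingly, in each sheet $S$ the unique lifts $\wt{h}_S$ and $\wt{h_i}_S$ furnished by Proposition \ref{how lifting works} cut off nested regions $\wt{R}_{h_i,S}\subseteq\wt{R}_{h,S}$ (the parts of $S$ lying over $R_{h_i}\subseteq R_h$), and by that proposition $\wt{h}_S$ is nontrivial precisely when $\wt{R}_{h,S}$ contains a branch point, and likewise for each $h_i$.

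The crux --- and the step I expect to be the main obstacle --- is to extract from the hypothesis that $h$ has no new lifts the following structural fact: for every sheet $S$, every branch point lying in $\wt{R}_{h,S}$ lies in some $\wt{R}_{h_i,S}$, and at most one of the $\wt{R}_{h_i,S}$ contains a branch point. I would argue both halves by contradiction. If some branch point of $\wt{R}_{h,S}$ avoided all of the $\wt{R}_{h_i,S}$, or if two distinct $\wt{R}_{h_i,S}$ contained branch points, then $\wt{h}_S$ would be a nontrivial arc whose enclosed branch points are not confined to a single one of the disks $R_{h_i}$; using simple connectedness of sheets (Proposition \ref{sheets are simply connected}) together with the corollary to Proposition \ref{how lifting works} describing when such a lift is a branching arc, one checks that this $\wt{h}_S$ can coincide with no lift of any arc strictly enclosed by $h$ --- whether it lies in the interior of $S$ or is a branching arc shared with a neighbouring sheet --- so it is a new lift of $h$, a contradiction. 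The delicate bookkeeping is the branching-arc case, where one must track which singular value $\wt{h}_S$ lies over in order to rule out its being a lift of a deeper arc.

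Granting the structural fact, the identity follows. On any sheet $S$ with $\wt{h}_S$ nontrivial there is a unique index $i(S)$ with a branch point in $\wt{R}_{h_{i(S)},S}$; a short case analysis --- according as $\wt{R}_{h,S}$ contains one branch point (both $\wt{h}_S$ and $\wt{h_{i(S)}}_S$ are then the branching arc over it) or several (the region of $S$ between them is a branch-point-free annulus, $f$ is injective there, so they are homotopic) --- gives $\wt{h}_S=\wt{h_{i(S)}}_S$; hence $\lift(h)\subseteq\bigcup_i\lift(h_i)$. Conversely, any nontrivial $\wt{h_i}_S$ forces a branch point into $\wt{R}_{h,S}$, so $\wt{h}_S$ is nontrivial and $i(S)=i$, whence $\wt{h_i}_S=\wt{h}_S\in\lift(h)$; so $\bigcup_i\lift(h_i)=\lift(h)$. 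Finally the union is disjoint: an arc common to $\lift(h_i)$ and $\lift(h_j)$ equals $\wt{h}_S$ for sheets $S,S'$ with $i(S)=i$ and $i(S')=j$, and if $S=S'$ then $i=j$, while if $S\neq S'$ the arc is a branching arc lying over a singular value forced into both $R_{h_i}$ and $R_{h_j}$, impossible as these disks are disjoint. This establishes $\lift(h)=\lift(h_1)\sqcup\cdots\sqcup\lift(h_k)$, and therefore $(\Sigma,C,\prec,\B,P_h(B))$ realizes $D$.
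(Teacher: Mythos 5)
Your argument is correct and follows the same overall approach as the paper, but spells out several steps the paper's terse proof leaves implicit: the upgrade from the literal hypothesis (every lift of $h$ is a lift of \emph{some arc enclosed by} $h$) to the stronger assertion ``a lift of some outermost $h_i$,'' the containment $\bigcup_i \lift(h_i) \subseteq \lift(h)$, and especially the disjointness of the $\lift(h_i)$, without which the weight redistribution would over-count. Isolating the set identity $\lift(h) = \lift(h_1) \sqcup \cdots \sqcup \lift(h_k)$ as the crux and deriving it sheet-by-sheet from Proposition~\ref{how lifting works} is exactly what the paper's one-sentence weight accounting tacitly relies on.
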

\begin{proof}
Let $h_1,\ldots,h_k$ be the outermost arcs enclosed by $h$. By assumption every lift of $h$ is also a lift of some $h_i$. Therefore, $(\Sigma,C,\prec,\B,P_h(B))$ still realizes $D$ as an unweighted diagram. Furthermore, since $w_{P_h}(h_i) = w_B(h_i)+w_B(h)$, the weight previously contributed by $h$ to the lifts of $h_i$ is not contributed directly by $h_i$, so $(\Sigma,C,\prec,\B,P_h(B))$ also realizes $D$ as a weighted diagram. 
\end{proof}

\begin{prop}\label{lifting pictures admit connected branching}
If $D$ is a maximal, realizable weighted arc diagram, then $D$ is realized by a lifting picture where every homonymous arc has connected branching.
\end{prop}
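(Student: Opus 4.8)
The plan is to start from an arbitrary lifting picture $(\Sigma,C,\prec,\B,B)$ realizing $D$ and to push it, by surgeries on the diagram $B$ alone, toward one in which no homonymous arc encloses singular values coming from two different clumps. Throughout, $\Sigma$, $C$, $\prec$ — and hence the branched cover $f$ — stay fixed, and I work up to the weight-zero equivalence on weighted diagrams.

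\textbf{Set-up.} First I would normalize so that every branching arc of $C$ appears in $D$: whenever $c\in C$ is missing, insert an innermost homonymous loop of weight $0$ encircling the singular value beneath $c$; by the Corollary to Proposition~\ref{how lifting works} this loop lifts to exactly $c$, and it changes neither the weighted diagram $D$ nor the branching of any other arc. Now every singular value $v$ has a well-defined clump $H(v)$, the clump of $D$ whose arcs contain the branching arc through $\wt v$, and ``$h$ has connected branching'' just says that all singular values enclosed by $h$ get the same clump. For a homonymous $h\in B$ put $\nu(h)=\#\{H(v): v\text{ enclosed by }h\}$ and $\mu(B)=\sum(\nu(h)-1)$, summed over homonymous $h$; then $\mu(B)\ge 0$, with equality precisely when the conclusion holds, so it is enough to strictly decrease $\mu$ whenever it is positive.

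\textbf{The reduction step.} Suppose $\mu(B)>0$, and choose a homonymous arc $h$ with $\nu(h)\ge 2$ that is innermost among such arcs; then every homonymous arc enclosed by $h$ has connected branching. If $h$ has no new lift, Lemma~\ref{pushing down weights works} lets us pass to $P_h(B)$, which still realizes $D$; since this only deletes $h$, every other arc keeps its branching and $\mu$ drops. Otherwise I would \emph{split} $h$: partition the singular values and outermost homonymous sub-arcs enclosed by $h$ by clump into blocks $G_1,\dots,G_r$ ($r=\nu(h)$), and — using that the disk $R_h\subseteq\B$ bounded by $h$ may be subdivided freely — encircle each $G_t$ by its own homonymous loop $h^{(t)}\subseteq R_h$ (pairwise disjoint, disjoint from the remaining arcs, based at the appropriate vertex, each of weight $w_B(h)$), and delete $h$. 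Each $h^{(t)}$ then has connected branching and no other arc's branching has moved, so $\mu$ strictly decreases; iterating drives $\mu$ to $0$.

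\textbf{The obstacle.} What remains is to see that the split diagram $B'$ still lifts to $D$ with the right weights, and this is the delicate part. By Proposition~\ref{how lifting works} the lift of $h$ on a sheet $S$ is the arc of $S$ separating the branch points on $\bdry S$ that lie below $G_1\cup\cdots\cup G_r$ from $\bdry\Sigma\cap\bdry S$, and similarly for each $h^{(t)}$ with $G_t$ replacing the union; the problem is a sheet $S$ whose boundary carries branch points below two different blocks, where $h$ lifts to an arc enclosing both and the naive split can lose or create arcs of $D$. Controlling this is exactly where maximality of $D$ enters, through the description of a maximal diagram as homonymous clumps glued along heteronymous ladders: I would argue that along $\bdry S$ the branch-carrying arcs of distinct clumps occur in contiguous runs cut off from $\bdry\Sigma\cap\bdry S$ by the heteronymous arcs of a ladder contained in $S$, so that the separating arc either already lies in a single clump (and equals the lift of the relevant $h^{(t)}$) or must be re-inserted into $B'$ tagged with the clump it genuinely belongs to; and that no weight is double counted because a branching arc bounding a sheet near $H_t$ bounds only sheets near $H_t$. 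Making this sheet-by-sheet analysis precise — together with the bookkeeping for the degenerate sheets that are outer triangles — is the main work; everything else is disk topology and Proposition~\ref{how lifting works}, with $\mu$ supplying termination.
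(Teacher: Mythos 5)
Your reduction step has a genuine gap, one you partly acknowledge yourself. The central difficulty is the ``split'': you want to replace $h$ by disjoint homonymous loops $h^{(1)},\dots,h^{(r)}$, each encircling one clump-block $G_t$, while keeping $\Sigma,C,\prec$ (hence the branched cover) fixed. But in the bigon a homonymous loop of a given color encircles a \emph{contiguous} interval of singular values in the order $\prec$, so if the singular values of two different blocks $G_s, G_t$ interleave under $\prec$, disjoint loops $h^{(s)}, h^{(t)}$ enclosing exactly those blocks simply do not exist --- the split is not even well-defined. This is not a peripheral bookkeeping issue; it is the heart of the proposition. The paper's proof gets around it by \emph{not} fixing the cover: it observes that if two sub-clumps $h_i,h_j$ under $h$ have disconnected branching, their order can be switched (changing $\prec$ and hence $f$) without changing the lift. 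After this reordering, blocks become contiguous, one can insert weight-zero homonymous arcs $\alpha_1,\dots,\alpha_n$ enclosing each block, and then $h$ has no new lifts, so Lemma \ref{pushing down weights works} applies. Your constraint ``$\Sigma,C,\prec$ stay fixed'' forecloses exactly the move that makes the argument go.

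A second, related problem is the weight bookkeeping. You delete $h$ and give each $h^{(t)}$ the full weight $w_B(h)$, rather than pushing $w_B(h)$ down onto the outermost arcs (Definition \ref{pushing down weight}). On a sheet $S$ whose boundary carries branch points from more than one block --- the case you flag as ``the obstacle'' --- the nontrivial lift of $h$ on $S$ need not coincide with the lift of any single $h^{(t)}$, and it need not coincide with any arc of $D$, so your operation can change both the support and the weights of the lifted diagram. Your sketch about heteronymous ladders is gesturing at the right phenomenon (it echoes Lemmas \ref{homonymous clump lemma} and \ref{heteronymous ladder lemma}), but you would essentially have to rederive the reordering move to make it precise. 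Your potential function $\mu$ is a nice idea for termination, but it only certifies progress once the local step is legitimate; the paper instead uses the minimality of $\epsilon(h)$ together with the monotone decrease in the number of arcs with disconnected branching.
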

\begin{proof}
Suppose $(\Sigma,C,\prec,\B,B)$ realizes $D$. If there is an arc $h$ with no new lifts, push down the weight of $h$. repeat this until every arc has new lift. Call the resulting lifting picture $(\Sigma,C,\prec,\B,B^\prime)$. By Lemma \ref{pushing down weights works}, $(\Sigma,C,\prec,\B,B^\prime)$ realizes $D$.

Define $\epsilon(a)$ be the number of critical values enclosed by homonyous arc $a$. Let $h \in B^\prime$ be the arc with the smallest $\epsilon(h)$ such that $h$ has disconnected branching. We know that $h$ must have a new lift. Let $h_1,\ldots,h_k$ be the homonymous clumps enclosed by $h$. Each $h_i$ has connected branching. Since $h$ has a new lift, there must be clumps $h_a,h_b$ so that $h_a \cup h_b$ has connected branching. On the other hand, if $h_i,h_j$ are such that $h_i \cup h_j$ have disconnected branching, then we may switch their order without affecting the lift. 

Using this, we may choose a new order $h_1^\prime,\ldots,h_k^\prime$ so that if $h_i^\prime \cup h_j^\prime$ has connected branching, then there is no $h_\ell$ with $i<\ell<j$ such that $h_i \cup h_\ell \cup h_j$ has disconnected branching. That is, we can push together all the clumps under $h$ which lift to the same homonymous clump upstairs. Add a weight zero homonymous arc enclosing each maximal collection of $h_i^\prime$s whose union has connected branching. Call these arcs $\alpha_1,\ldots,\alpha_n$. Now, $h$ has no new lifts: it always lifts parallel to the lifts of $\alpha_1,\ldots,\alpha_n$. Call this diagram $B^{\prime\prime}$. Pass to $P_h(B^{\prime\prime})$. The lifting picture $(\Sigma,C,\prec,\B,P_h(B^{\prime\prime}))$ still realizes $D$. Since each of the arcs $\alpha_i$ introduced this way have connected branching, $B^{\prime\prime}$ has fewer arcs with disonnected branching than $B^\prime$. If $B^{\prime\prime}$ still has an arc with disconnected branching, choose the arc $h$ such that $h$ has the smallest $\epsilon(h)$ among arcs with disconnected branching, and eliminate it in the same way. Repeat this process until the resulting lifting picture $(\Sigma,C,\prec,\B,B^\dagger)$ has the property that all homonymous arcs of $B^\dagger$ have connected branching. Since $B^{\prime\prime}$ has only finitely many arcs, and each step of this process reduces the number of arcs with disconnected branching, this process terminates after finitely many steps. By Proposition \ref{pushing down weights works}, $B^\dagger$ still realizes $D$.

Finally, add weight zero heteronymous arcs separating each homonymous clump of $B^\dagger$. Call the resulting diagram $B^{max}$. It follows from Lemma \ref{pushing down weights works} and Lemma \ref{how lifting works} that these heteronymous arcs have no nontrivial lifts which are not also lifts of heteronymous arcs already in $B^\dagger$, so indeed $B^{max}$ still realizes $D$.
\end{proof}

Lifting pictures where all homonymous arcs have connected branching are useful, because in such lifting pictures, each homonymous clump lifts nontrivially to a unique homonymous clump.

\begin{lemma}\label{homonymous clump lemma}
Let $D$ be a maximal arc diagram on a substrate $\Sigma$. Suppose $(\Sigma,C,\prec,\B,B)$ realizes $D$, and all homonymous arcs in $B$ have connected branching. Then each homonymous clump in $B$ lifts nontrivially to a unique homonymous clump in $D$.
\end{lemma}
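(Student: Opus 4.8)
The plan is to prove that $\lift(E):=\bigcup_{e\in E}\lift(e)$ is a nonempty set of arcs of $D$ which is contained in a single homonymous clump of $D$. The uniqueness in the statement is then automatic: two distinct homonymous clumps of $D$ share no arc, since otherwise the union of their arc sets would be a strictly larger connected homonymous subdiagram, contradicting maximality of one of them. So everything reduces to these two assertions, and I would organize the argument around them.

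Before that, three preliminaries. First, since $f$ commutes with the $2$-coloring, every lift of a homonymous arc is homonymous, so $\lift(E)$ consists of homonymous arcs of $D$. Second, because $D$ is maximal, every branching arc actually lies in $D$: a branching arc is a nontrivial homonymous arc, and if it crossed an arc $d\in D$, then $d$ --- which, being a lift of an arc of $B$, is by Proposition \ref{how lifting works} either contained in the interior of a single sheet or itself a branching arc --- would have to be a branching arc, contradicting that the branching arcs are pairwise noncrossing; so a branching arc crosses nothing in $D$ and hence lies in $D$. Thus $C\subseteq D$, and ``$h$ has connected branching'' may be read literally: the branching arcs over the singular values enclosed by $h$ lie in one clump of $D$. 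Third, as in the proof of Theorem \ref{heteronymous realization works}, I may take the branch cuts disjoint from and parallel to the homonymous arcs of $B$; then each $e\in E$ is disjoint from every branch cut, so its lift $e_S$ in each closed sheet is well defined, and its nontrivial lifts are precisely the $e_S$ that are nontrivial. Nonemptiness of $\lift(E)$ is now immediate: $E$, being a clump, contains a nontrivial arc $e$ (were all its arcs trivial, $E$ would be isotopic into a boundary edge and could not be maximal); on the bigon $e$ encloses a singular value $s$; and by Proposition \ref{how lifting works} $e$ lifts nontrivially on each of the two sheets whose boundary meets the branch point over $s$.

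For containment in a single clump I would first prove a local statement: for each nontrivial $e\in E$, every arc of $\lift(e)$ lies in the clump $H(e)$ of $D$ which, by connected branching of $e$, contains all branching arcs $c_1,\dots,c_k$ lying over the singular values enclosed by $e$. Fix a sheet $S$ on which $e_S$ is nontrivial. By Proposition \ref{how lifting works}, $e_S$ divides $S$ into a component $B_S$ containing $\bdry\Sigma\cap\bdry S$ and a disk $A_S$ whose boundary is $e_S$ together with exactly those of $c_1,\dots,c_k$ that lie in $\bdry S$. Since $e_S$ and these branching arcs are all homonymous, and consecutive arcs of the polygon $\bdry A_S$ meet at a vertex of $\Sigma$, all vertices of $\bdry A_S$ carry the single color of the endpoints of $e$; and since every vertex of $D$ in $A_S$ lies on $\bdry A_S$ (vertices of $D$ lie on $\bdry\Sigma$, and $A_S\cap\bdry\Sigma\subseteq\bdry A_S$), every triangle of $D$ tiling $A_S$ has all three vertices of that color, hence all three sides homonymous. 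So the arcs of $D$ in $\overline{A_S}$ form a connected fully homonymous subdiagram; in particular $e_S$ lies in the clump of the $c_i$, i.e.\ in $H(e)$. Letting $S$ vary, $\lift(e)\subseteq H(e)$.

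It remains to show that $H(e)$ is the same clump for all nontrivial $e\in E$, and I expect this to be the main obstacle. Since $\B$ is a bigon, a homonymous arc of $B$ is a loop at one of the two vertices, and connectedness of $E$ forces all arcs of $E$ to be loops at a common vertex $v$; thus $E$ is a nested forest of loops at $v$. If $e\subseteq e'$ are nested, then $e'$ encloses every singular value $e$ does, so $H(e)$ and $H(e')$ share a branching arc and coincide; chaining up each tree, all loops in one tree of the forest share a common $H$. The difficulty is that two incomparable outermost loops $\omega_1,\omega_2$ of $E$ are enclosed by no common loop of $B$, so connected branching of a larger arc is unavailable, and one must genuinely use that $E$ is connected \emph{as a subdiagram} (and likely that $D$ is maximal). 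My plan here is to pass to a component $W$ of $f^{-1}(\widehat E)$, where $\widehat E$ is the (contractible) union of the disks enclosed by the outermost loops of $E$, and to rerun the disk argument inside $W$, where $f|_W$ is a branched cover of a disk whose branch points all lie over singular values enclosed by $E$: the homonymous arcs bounding the sheets of $W$ should then knit the various $H(\omega_j)$ into one clump. Verifying that the relevant branching arcs and nontrivial lifts all occur inside a single such $W$ --- rather than being distributed among several components --- is the technical crux.
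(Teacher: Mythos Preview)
Your local step is correct and is actually cleaner than the paper's. Where the paper argues on a ``mixed'' sheet that some heteronymous arc of $B$ lifts to separate $\widetilde h$ from the branching arcs of the foreign clump, you instead look at the disk $A_S$ cut off by $e_S$, observe that its only boundary vertices are the $\alpha$-colored endpoints of $e_S$ and of the relevant branching arcs, and conclude that the maximal triangulation of $A_S$ by $D$ is entirely homonymous. That coloring argument avoids any appeal to the heteronymous arcs of $B$ and is a genuine simplification.

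The global step, however, is where you go astray, and the issue is your reading of ``connected homonymous subdiagram''. You take it to mean graph-connected through a shared vertex; on the bigon this makes \emph{every} $\alpha$-loop connected to every other, so a clump $E$ becomes a forest with possibly many incomparable outermost loops. Under that reading the lemma is simply false: take two non-nested $\alpha$-loops $\ell_1,\ell_2$ in $B$, each enclosing a single singular value, with the two corresponding branching arcs $c_1,c_2$ lying in distinct homonymous clumps of $D$. Each $\ell_i$ has connected branching, yet your ``clump'' $\{\ell_1,\ell_2\}$ lifts into two different clumps upstairs. The notion the paper has in mind (see the description of clump boundaries at the start of Section~\ref{general maximal section}, Figure~\ref{general bigon}, and the constructions in Proposition~\ref{lifting pictures admit connected branching} and Algorithm~\ref{homonymous recursion}) is adjacency through homonymous triangles; on the bigon this forces a clump to be a nested family with a \emph{unique} outermost loop $e_0$. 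With that in hand the global step is one line: every $e\in E$ is enclosed by $e_0$, so its enclosed singular values are among those of $e_0$, hence its branching lies in the branching of $e_0$, which by hypothesis sits in a single clump $\widetilde H$ of $D$; thus $H(e)=\widetilde H$ for all $e$. The paper's proof does not spell this out either---it silently writes one $\widetilde H$ for the whole clump---but the nesting is what makes that legitimate, and it renders your $f^{-1}(\widehat E)$ plan unnecessary.
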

\begin{proof}
Let $H$ be a homonymous clump in $B$, and $h \in H$. Since $h$ has connected branching, all its nontrivial lifts are on sheets bounded by one or more branching arc in a single homonymous clump $\wt{H}$ in $D$. If $S$ is a sheet completely bounded by arcs in $\wt{H}$ then clearly any nontrivial lifts of $h$ on that sheet are arcs of $\wt{H}$. On the other hand, if $S$ is bounded by some arcs of $\wt{H}$ and some arcs from another homonymous clump, then the lift of at least one heteronymous arc in $B$ separates the lift of $h$ from the lifts of any arcs in any other homonymous clump in $B$. So $H$ lifts nontrivially only to $\wt{H}$.
\end{proof}

\begin{lemma}\label{heteronymous ladder lemma}
Let $D$ be a realizable, maximal arc diagram, and $S$ a heteronymous ladder in $D$.
\begin{enumerate}
	\item $S$ is simply connected.
	\item Each homonymous arc bounding $S$ is contained in a different homonymous clump.
\end{enumerate}
\end{lemma}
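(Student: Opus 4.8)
The plan is to read the topology of $S$ off from a branched cover realizing $D$. I would fix a lifting picture $(\Sigma,C,\prec,\B,B)$ realizing $D$ — for part (2), one in which every homonymous arc of $B$ has connected branching, which exists by Proposition~\ref{lifting pictures admit connected branching} — and let $f:\Sigma\to\B$ be the associated branched cover. The observation used in both parts is that $\inside(S)$ is disjoint from the branching diagram $C$: a heteronymous ladder contains only heteronymous arcs in its interior, while every arc of $C$ is homonymous. Hence $\inside(S)$ is a connected subset of $\Sigma\setminus C$ and so lies in a single sheet $S_0$, and by Proposition~\ref{sheets are simply connected} $f$ restricts to a homeomorphism of $\inside(S_0)$ onto $\inside(\B)$ with the branch cuts removed.

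For part (1): since $\Sigma$ has no interior marked points and $D$ is maximal, $D$ triangulates $\Sigma$, so $S$ is a union of triangles, and no such triangle can have three heteronymous sides, since that would demand a proper $2$-coloring of the three vertices of a triangle. Therefore the dual graph of the triangulation of $S$ — one node per triangle, one edge per interior (necessarily heteronymous) arc — is connected with every degree at most $2$, hence is a single path or a single cycle. In the path case $S$ is a chain of triangles glued successively along single edges, which is a disk, and we are done. To exclude the cycle case, label the triangles cyclically as $T_1,\dots,T_m$ with $T_i\cap T_{i+1}$ a heteronymous arc $h_{i+1}$ (indices mod $m$; the $h_i$ pairwise distinct), pick a point in each $\inside(T_i)$, and join consecutive ones by arcs crossing the shared $h_i$ transversally once; this produces a loop $\gamma\subset\inside(S)\subset\inside(S_0)$ meeting each $h_i$ exactly once. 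Pushing $\gamma$ forward by the homeomorphism $f|_{S_0}$ gives a loop $f(\gamma)\subset\inside(\B)$ meeting $f(h_i)$ exactly once; but $f(h_i)$, being the image of a heteronymous arc of $D$, is a heteronymous arc of $B$, that is a simple arc joining the two distinctly colored vertices of the bigon, which separates the disk $\B$, so $f(\gamma)$ must cross it an even number of times. This contradiction proves part (1); note it rules out the annulus and M\"obius-band cases at once.

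For part (2): I would first record that distinct clumps of $B$ lift to distinct clumps of $D$. Indeed, if a path of homonymous arcs in one clump $\wt H$ of $D$ joined a lift of an arc in one clump of $B$ to a lift of an arc in another, its image under $f$ would be a path of homonymous arcs of $B$ joining those two clumps, which is impossible. Combined with Lemma~\ref{homonymous clump lemma}, this shows the clumps of $D$ are exactly the nontrivial lifts of the clumps of $B$. Now suppose two homonymous arcs $e,e'$ bounding $S$ lay in one clump $\wt H$ of $D$; then $e,e'$ are the (unique, by Proposition~\ref{how lifting works}) lifts in $S_0$ of two arcs $\bar e,\bar e'$ of the single clump $H$ of $B$ lifting to $\wt H$. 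Each of $e,e'$ is the homonymous side of a boundary triangle of the strip $S$ (by part (1)) whose remaining interior side is a heteronymous arc lying in $S_0$; carrying these triangles through $f|_{S_0}$ exhibits $\bar e$ and $\bar e'$ as two distinct outermost arcs of $H$, both facing the heteronymous region of $\B$ whose lift on $S_0$ is $f(S)$. The remaining — and technically central — step is to rule out this configuration, i.e. a clump $H$ of $B$ two of whose outermost arcs face a common adjacent heteronymous region: between those two arcs $H$ would enclose either a marked point, incompatible with the structure of $f(S)$ once the branch cut through that point is removed, or no marked point, contradicting cleanness of $B$ via the no-bigons argument of Figure~\ref{nobigons}.

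I expect part (1) to go quickly once $S_0$ is in hand, the only subtlety being the parity count. The main obstacle is part (2): establishing that $S$ corresponds to a single heteronymous region of $B$ flanked by exactly two clumps, and in particular carrying out the final step that forbids a clump of $B$ from having two outermost arcs facing a common adjacent heteronymous region — this is the point at which realizability, through cleanness of $B$ and the no-bigons obstruction, is genuinely needed.
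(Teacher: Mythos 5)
Part (1) of your proof is essentially correct but takes a longer route than the paper.  You start from the same key observation the paper makes -- that $\inside(S)$ contains only heteronymous arcs and hence no branching arcs, so $\inside(S)$ lies in a single sheet $S_0$ -- but then pass through the dual graph of the triangulation of $S$ and a parity argument to rule out the annulus/M\"obius-band case.  The paper simply observes that $S_0$ is a disk (Proposition~\ref{sheets are simply connected}) and that $S$ is a region of that disk cut out by arcs, hence a disk.  Your parity argument does work (the $f(h_i)$ are pairwise distinct by the uniqueness of lifts in a sheet from Proposition~\ref{how lifting works}, though you should say so, since otherwise $f(\gamma)$ could cross a single arc of $B$ twice), but it is work the paper's one-line observation makes unnecessary.

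Part (2) has a genuine gap, which you flag yourself.  After reducing to ``a clump $H$ of $B$ has two outermost arcs $\bar e,\bar e'$ facing a common heteronymous region,'' you do not rule this out: the dichotomy you offer (marked point versus no marked point between $\bar e$ and $\bar e'$) is only a sketch, and the appeal to cleanness via the no-bigons picture does not engage the actual obstruction.  You also never use the object that makes the paper's proof go: since $S$ is simply connected and $D$ is maximal, there is a heteronymous arc $h$ of $D$ \emph{inside $S$} separating $e$ from $e'$; by Proposition~\ref{how lifting works} this $h$ must be the lift of a heteronymous arc of $B$ that separates the two critical values attached to $e$ and $e'$, and that is precisely what Lemma~\ref{homonymous clump lemma} (applied to a lifting picture with connected branching, via Proposition~\ref{lifting pictures admit connected branching}) forbids for two critical values belonging to the same clump.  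That is the missing step: push the contradiction upstairs in $S$, not downstairs in $B$.  Separately, your auxiliary claim that ``distinct clumps of $B$ lift to distinct clumps of $D$'' is not correctly justified: the image under $f$ of a path of homonymous arcs in $\wt H$ need not be a path of arcs of $B$, because $f$ sends a branching arc of $\wt H$ to a branch cut, which is not an arc of $B$; so the ``path of homonymous arcs of $B$ joining two clumps'' you want never materializes.
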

\begin{proof}
Every branching arc lies in some homonymous clump. So $S$ is contained in a single sheet of the cover, and is therefore simply connected. To prove statement 2, observe that $S$ is contained in a sheet $S^\prime$. If multiple arcs of the same homonymous clump $K$ are in $\bdry S$, then at least 2 branching arcs $a,b$ in $K$ are in $\bdry S^\prime$. Since the diagram is maximal and $S^\prime$ is simply connected, there is a heteronymous arc in $S$ which separates $a$ from $b$ in $S^\prime$. But this cannot be the lift of any arc on the bigon, as by Lemma \ref{homonymous clump lemma} no heteronymous arc on the bigon can separate two critical points from the same homonymous clump.
\end{proof}

This allows us to represent a heteronymous ladder $S$ as a list of the form needed to input into Algorithm \ref{general merge}. The homonymous arcs of $S$ can be canonically ordered just as in the maximally heteronymous case. Represent $S$ as a list $s_0,w_1,s_1,\ldots,w_n,s_n$ where $s_i$ is the homonymous clump which contains the $i^{th}$ homonymous arc bounding $S$, and $w_i$ is the weight of the heteronymous arc separating $s_{i-1}$ and $s_i$ in $S$. 

\begin{algo}\label{realization}
\emph{Maximal Diagram Realization.}\\
INPUT: A maximal, weighted arc diagram $D$ on a substrate $\Sigma$.\\
OUTPUT: A lifting picture $(\Sigma,C,\prec,\B,B)$ realizing $D$.\\
PROCEDURE:  For each homonymous clump $K$, perform homonymous recursion (that is, Algorithm \ref{homonymous recursion}) on $K$, to get a lifting picture $L_K = (\Sigma,C_K,\prec,\B_K,B_K)$. 

For each heteronymous ladder $S$, represent $S$ as a list $L(S)$. Choose a heteronymous ladder $T_0$, and let $Z_0=L(T_0)$. If this is the only heteronymous ladder, we're done. Otherwise, choose another heteronymous ladder $U$. Let $T_1 = T_0 \cup U$. Compute $Z_1 = GeneralMerge(Z_0,L(U))$. Continue in this fashion until all heteronymous ladders are merged. Let $Z=z_0,w_1^Z,z_1,\ldots,w_n,z_n$ be the resulting list.

Now construct the lifting picture. Let $\mathcal{K}$ be the set of homonymous clumps in $D$. The set $C$ of branching arcs is $\cup_{K \in \mathcal{K}} C_K$. The order $\prec$ is given by the order on each $C_K$ and the ordering on $\mathcal{K}$ provided by $Z$. The bigon diagram $B$ consists of the homonymous subdiagrams $B_{z_1},\ldots,B_{z_n}$ computed in the first part of the procedure. $B_{z_i}$ and $B_{z_{i+1}}$ are separated by a heteronymous arc with weight $w_i^Z$. Return $(\Sigma,C,\prec,\B,B)$. $\Diamond$
\end{algo}

\begin{theorem}
Let $D$ be a maximal weighted arc diagram on a substrate $\Sigma$. Then $D$ is realizable if and only if algorithm \ref{realization} finishes without raising an exception, and if it returns $L = (\Sigma,C,\prec,\B,B)$, then $L$ realizes $D$. 
\end{theorem}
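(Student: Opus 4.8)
The plan is to prove both implications by a locality argument that reduces the claim to the two special cases already handled: Theorem~\ref{homonymous recursion works} governs the homonymous clumps, while Theorem~\ref{heteronymous realization works} together with Proposition~\ref{general merge works} governs the heteronymous ladders. The common tool is Proposition~\ref{how lifting works}, which computes lifts sheet-by-sheet and shows that the nontrivial lifts of an arc $b$ on the bigon are determined by which branch points lie on the boundaries of which sheets near $b$.

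For the forward direction, suppose Algorithm~\ref{realization} returns $L=(\Sigma,C,\prec,\B,B)$. I would first check that $(C,\prec)$ is valid branching data in the sense of Proposition~\ref{branch data}. Each $C_K$ is a branching diagram on $\Sigma_K$ because the call to homonymous recursion succeeded (Theorem~\ref{homonymous recursion works}); $\Sigma$ is recovered from the $\Sigma_K$ by gluing in outer triangles and heteronymous ladders; and since each heteronymous ladder is bounded by homonymous arcs lying in distinct clumps, gluing along a ladder joins sheets of the $C_K$-covers into simply connected sheets still carrying exactly two boundary edges of $\Sigma$, while a bookkeeping count gives $|C|=\sum_K|C_K|=|V_\Sigma|/2-\chi(\Sigma)$. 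The order $\prec$ extends the canonical order at each vertex: within a clump this is inherited from $\prec_K$, and across clumps it is forced by the ladder orders, which non-exceptional execution of $GeneralMerge$ precisely guarantees. Next comes the locality step: by Proposition~\ref{how lifting works}, the nontrivial lifts of an arc of $B_K$ involve only branch points over singular values it encloses, all of which lie in $\Sigma_K$, so $B_K$ lifts under $f$ exactly as under the clump cover, namely to $K$ with the correct weights. Finally, each heteronymous arc $h\in D$ lies in a heteronymous ladder $S$ flanked by two clumps consecutive in the list representing $S$; by Proposition~\ref{how lifting works} $h$ receives precisely the heteronymous arcs of $B$ lying between the corresponding critical values, and by Proposition~\ref{general merge works} --- applied along the chain of merges and legitimized by Lemma~\ref{merge order doesn't matter} --- their total weight is $w_D(h)$. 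Hence $B$ lifts to $D$ and $L$ realizes $D$.

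For the converse, suppose $D$ is realizable. By Proposition~\ref{lifting pictures admit connected branching} there is a lifting picture $(\Sigma,C^\ast,\prec^\ast,\B,B^\ast)$ realizing $D$ in which $B^\ast$ has the generic form --- homonymous clumps separated by heteronymous arcs --- and every homonymous arc has connected branching. By Lemma~\ref{homonymous clump lemma} each homonymous clump of $B^\ast$ lifts nontrivially to a unique homonymous clump of $D$; since heteronymous arcs lift to heteronymous arcs and, by the separation argument in the proof of Lemma~\ref{homonymous clump lemma}, no heteronymous arc of $\B$ separates two critical points of the same clump of $D$, this correspondence is a bijection. Cutting $\B$ along the heteronymous arcs flanking a clump $H$ of $B^\ast$ and pasting triangles exhibits the matched clump $\wt H$ of $D$ as a standalone realizable diagram, so homonymous recursion succeeds on every such $\wt H$ by Theorem~\ref{homonymous recursion works}, and the first phase of Algorithm~\ref{realization} raises no exception. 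For the second phase, Lemma~\ref{heteronymous ladder lemma} supplies each heteronymous ladder with a well-formed list; the two ways $GeneralMerge$ can raise an exception --- an inconsistent order for a shared clump, or a failed weight-sum equation --- are both excluded by the single global realization, since the clumps inherit a consistent total order from $\prec^\ast$ and, by Proposition~\ref{how lifting works}, the weight of the heteronymous arc between consecutive clumps of a ladder is a fixed telescoping sum of global ``gap weights'' read off $B^\ast$, so the weight-sum equations on shared stretches of clumps match automatically. Finitely many ladders and connectedness of $\Sigma$ force termination with all clumps merged, so the algorithm returns a lifting picture.

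The step I expect to be the main obstacle is the gluing/locality argument of the forward direction: proving that the combined branching diagram $C=\bigcup_K C_K$ with the combined order $\prec$ really is valid branching data, and that under the resulting cover the lift decomposes cleanly as ``each $B_K$ lifts to its clump and each heteronymous arc of $\B$ fills a gap in a heteronymous ladder''. This is where the two special-case theorems must be stitched together, and it calls for careful sheet-by-sheet bookkeeping of how sheets of the individual clump covers are joined across heteronymous ladders and outer triangles, confirming that no new crossings or new lifts appear and that every sheet keeps exactly two boundary edges of $\Sigma$.
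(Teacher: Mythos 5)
Your proposal follows the same overall strategy as the paper: reduce to the two special cases via Theorem~\ref{homonymous recursion works} and Theorem~\ref{heteronymous realization works}, using Proposition~\ref{lifting pictures admit connected branching}, Lemma~\ref{homonymous clump lemma}, and Lemma~\ref{heteronymous ladder lemma} to mediate between clumps and ladders, together with the sheet-by-sheet locality of Proposition~\ref{how lifting works}. You supply somewhat more explicit bookkeeping than the paper's own brief proof --- in particular, spelling out the verification that $(C,\prec)$ forms valid branching data in the forward direction, and invoking Proposition~\ref{general merge works} and Lemma~\ref{merge order doesn't matter} by name --- but the decomposition and the key lemmas coincide with the paper's argument.
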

\begin{proof}
Suppose Algorithm \ref{realization} returns $L = (\Sigma,C,\prec,\B,B)$. Then by Lemma \ref{homonymous clump lemma}, each maximal homonymous subdiagram $B^\prime$ of $B$ lifts to a unique homonymous clump $D^\prime$ in $D$. Since the algorithm finished without error, it follows from Theorem \ref{homonymous recursion works} that $B^\prime$ realizes $D^\prime$. It now follows from the proof of Theorem \ref{heteronymous realization works} and Lemma \ref{heteronymous ladder lemma} that the heteronymous arcs of $B$ lift to exactly the heteronymous arcs of $D$, and that they get the correct weight.

Now, suppose that $D$ is realizable. By Proposition \ref{lifting pictures admit connected branching}, we may assume $D$ is realized by a lifting picture with connected branching. Then by Lemma \ref{homonymous clump lemma}, each homonymous clump of $D$ is realizable as a homonymous diagram in its own right, so homonymous recursion will succeed on each clump. Similarly, it follows from the proof of Theorem \ref{heteronymous realization works} that $GeneralMerge$ must succeed in merging all the heteronymous ladders of $D$. So algorithm \ref{realization} finishes without error when applied to $D$.
\end{proof}

\section{Further Directions}

The motivation for the work in this dissertation came from Heegaard Floer homology. Heegaard Floer homology is a topological invariant of closed, orientable 3-manifolds, introduced by Ozsvath and Szabo in \cite{heegaard-floer-original}. It is built on top of what is known as a Heegaard diagram. Detailed information on Heegaard diagrams can be found, for example, in \cite{rolfsen}. In brief, a Heegaard splitting of a 3-manifold $M$ is a decomposition of $M$ into two handlebodies $H_1$ and $H_2$. One may reconstruct $M$ by gluing together $\bdry H_1$ and $\bdry H_2$ by a homeomorphism. It turns out that enough data to specify this gluing up to isotopy is encoded in what is called a Heegaard diagram. Let $\Sigma$ be a surface of genus $g$ homeomorphic to $\bdry H_1$ and $\bdry H_2$. A Heegaard diagram on $\Sigma$ is a choice, up to homotopy, of $2g$ closed curves on $\Sigma$, $g$ of which are labeled as coming from $H_1$ and the rest of which are labeled as coming from $H_2$. Each set of $g$ curves with the same labels must be linearly independent as 1 dimensional homology classes.

Heegaard Floer homology uses unordered $g$-tuples of intersection points of $H_1$ curves with $H_2$ curves as the generators of the Heegaard Floer chain complex. Computing the boundary map requires, among other things, counting structures known as Whitney disks. A Whitney disk, as defined in \cite{heegaard-floer-original}, can be thought of as a pair of homotopy classes of maps on a surface with boundary $\Sigma^\prime$. One map is an immersion $\phi: \Sigma^\prime \to \Sigma$ which sends $\bdry \Sigma^\prime$ to the labeled curves on $\Sigma$. The other is a branched cover $f$ taking $\Sigma^\prime$ to the unit disk with two marked points $p_1$ and $p_2$ on its boundary. For $i=1,2$, we must have that $\phi(f^{-1}(p_i))$ is an intersection point of an $H_1$ and an $H_2$ curve for each point in $f^{-1}(p_i)$. 

To compute the Heegaard Floer boundary map, one must count the number of holomorphic representatives of these Whitney disks. Holomorphic, in this case, means that for a choice of complex structure on $\Sigma$, the complex structure on $\Sigma^\prime$ one gets by pulling back the complex structure on $\Sigma$ under $\phi$ is the same as the complex structure on $\Sigma^\prime$ one gets by pulling back the standard complex structure on the unit disk under $f$. 

We ultimately wish to simplify this computation by using a more combinatorial version of a Whitney disk, which is based on a so-called degenerate complex structure. A degenerate complex structure is something like a measured lamination, which can be pulled back under immersions and branched covers, and which can be encoded, in the right circumstances, using an arc diagram. We hope that the algorithm presented here will be useful in a new method for computing Heegaard Floer homology, and that lifting pictures will be of independent interest.

\bibliography{bibliography.bib}
\bibliographystyle{plain}
\end{document}